\newbox\xratbelow
\newbox\xratabove
\newtheorem{theorem}{Theorem}[section]
\newtheorem{corollary}[theorem]{Corollary}
\newtheorem{lemma}[theorem]{Lemma}
\newtheorem{proposition}[theorem]{Proposition}
\theoremstyle{definition}
\newtheorem{definition}[theorem]{Definition}
\newtheorem{remark}[theorem]{Remark}
\newtheorem*{example}{Example}
\newtheorem*{namedcorollary}{Corollary of Lemma \ref{lemma:AK}}
\begin{document}

\title[Anti-integrable limits for almost-periodic systems]{Anti-integrable limits for generalized Frenkel-Kontorova models on almost-periodic media}
\author{Jianxing Du,  Xifeng Su}

\address{School of Mathematical Sciences, Beijing Normal University, No. 19, XinJieKouWai St., HaiDian District, Beijing 100875, P. R. China}
\email{jianxingdu@mail.bnu.edu.cn\\ jxdu000@gmail.com}
\address{School of Mathematical Sciences, Laboratory of Mathematics and Complex Systems (Ministry of Education)\\
Beijing Normal University,
No. 19, XinJieKouWai St., HaiDian District, Beijing 100875, P. R. China}
\email{xfsu@bnu.edu.cn, billy3492@gmail.com}

\date{\today}

\begin{abstract}
We study the equilibrium configurations for  generalized Frenkel-Kontorova models  subjected to almost-periodic media.
By contrast with the spirit of  the KAM theory, our approach consists in establishing the other perturbation theory for fully chaotic systems far away from the integrable, which is called ``anti-integrable" limits.
More precisely, we show that for large enough potentials, there exists a locally unique equilibrium with any prescribed rotation number/vector/plane, which is hyperbolic. The assumptions are general enough to satisfy both short-range and long-range Frenkel-Kontorova models and their multidimensional analogues.
\end{abstract}

\maketitle
\tableofcontents

\section{Introduction}
The classical Frenkel-Kontorova models  describe one dimensional chains of interacting particles subjected to one dimensional periodic media as in \cite{FK39, BK2004}. 
The state of a system is given by $u=\{u_n\}_{n\in\mathbb Z}$ with $u_n\in\mathbb{R}$, and we call it  the configuration of that system. 
The interaction of the particles with the substratum is modeled by a periodic function $V: \mathbb{R}\rightarrow \mathbb{R}$.
The physical states are then selected to be the critical points of the following formal energy functional:
\begin{equation}\label{short}
\mathcal S(\{u_n\}_{n\in\mathbb Z})=\sum_{n\in\mathbb Z} \left[\frac12(u_n-u_{n+1})^2+ \lambda V(u_n)\right] \qquad \text{ for some } \lambda\in \mathbb{R}.
\end{equation}
The critical points of $\mathcal{S}$ are obtained by taking its formal
derivatives and setting them to zero, i.e.,
\begin{equation}\label{equilibrium}
u_{n+1}+u_{n-1}-2u_n- \lambda V'(u_n)=0.
\end{equation}

In this paper, we are concerned with  models subjected to almost-periodic media, that is, we choose $V$ to be an almost-periodic function $V(\theta)=\widehat{V}(\theta\alpha)$, $\widehat V:\mathbb T^{\mathbb N}\to\mathbb R$, with infinite-dimensional rationally independent frequency $\alpha\in[0,1]^{\mathbb N}$. One example to keep in mind would be $V(\theta) = \sum\limits_{n=0}^\infty \frac{1}{2^n} \cos{\frac{1}{\pi^n}\theta}$.\smallskip

As is standard in the literature, there are two perturbation theories for constructing solutions to the equilibrium equation \eqref{equilibrium}: one is the celebrated Kolmogorov-Arnold-Moser(KAM) theory which essentially focuses on perturbations of integrable dynamical systems, and the other is the theory of the anti-integrable limits, which deals with perturbations of fully chaotic dynamical systems. 

On one hand, the KAM theory for equilibria in one dimensional almost-periodic media was recently established in \cite{ADSWY}, while the KAM theory for periodic and quasi-periodic potential could be found in works such as \cite{Rafaelbook, SuL2012, SuL2012b} and the references therein. We point out that these KAM equilibria for periodic potentials can be interpreted as orbits of some twist map and are indeed elliptic.

On the other hand, the theory of anti-integrable limits for periodic potentials was first obtained in \cite{Aubry1990} (see e.g. \cite{Bolotin_2015} for a survey), and the extension of this theory to pattern-equivariant potentials was provided in \cite{Trevino2019}. Note that these equilibria for periodic potentials derived from anti-integrable limits are hyperbolic, see \cite{Goroff, Bolotin_2015}. Moreover, they could be hyperbolic Cantori, which could be thought of as remnants of KAM tori (see \cite{Mackay_1992}).

As for non-perturbative approaches, it is worthwhile to mention the celebrated Aubry-Mather theory developed independently by S. Aubry \cite{Aubry83} and J. Mather \cite{Mather82}, which utilizes variational methods to look for ground states. However, extending the Aubry-Mather theory to quasi-periodic or almost-periodic media presents significant challenges, and we will go in this direction in the present paper.

The aim of this paper is to develop the theory of anti-integrable limits for generalized mechanical systems driven by almost-periodic potentials. Specifically, we make the following contributions:
\begin{itemize}
\item we  establish anti-integrable limits in a  more general setting, see  Theorem \ref{thm:main} in Section~\ref{sec:main} and 

\item we prove the hyperbolicity of the anti-integrable equilibria, see Theorems~\ref{thm:hyperbolic} and ~\ref{thm:DefofHyperbolic} in Section~\ref{sec:hyperbolic}.  
\end{itemize}

As an application of the above results, we present the following theorem for the short-range  model \eqref{short} with an almost-periodic potential. To the best of our knowledge, this result is new.
\begin{theorem}
Let $V$ be an almost-periodic function of class $C^2$ with at least one non-degenerate critical point.    Then, there exists a constant $\lambda_0>0$, determined by $V$, such that for any $\lambda>\lambda_0$ and $\rho\in\mathbb{R}$, there exists a sequence $\{u_n\}_{n\in\mathbb{Z}}$ that satisfies the equilibrium equation \eqref{equilibrium} and fulfills the condition  $$\lim_{n\to\infty}\frac{u_n-u_0}{n}=\rho.$$ Moreover, the sequence $\{(u_n,u_n-u_{n-1})\}_{n\in\mathbb{Z}}$  constitutes a hyperbolic orbit of the map $(x,p)\mapsto(x+p+\lambda V'(x),p+\lambda V'(x))$.
\end{theorem}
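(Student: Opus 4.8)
The plan is to deduce this corollary from the general anti-integrable machinery announced as Theorem~\ref{thm:main} (and its hyperbolicity counterparts, Theorems~\ref{thm:hyperbolic} and~\ref{thm:DefofHyperbolic}), by checking that the short-range model \eqref{short} fits the hypotheses of that abstract theorem. The starting observation is that \eqref{equilibrium} is \emph{not} directly in anti-integrable form because the dominant term is not $\lambda V'(u_n)$ alone but must be balanced against the discrete Laplacian; the standard trick (going back to \cite{Aubry1990}) is to rescale. Concretely, write $u_n = \rho n + v_n$ and think of $\lambda$ large as the anti-integrable parameter $\varepsilon = 1/\lambda \to 0$: dividing \eqref{equilibrium} by $\lambda$ gives $V'(u_n) = \varepsilon(u_{n+1}+u_{n-1}-2u_n)$, so at $\varepsilon = 0$ the equation degenerates to $V'(u_n) = 0$, i.e. each $u_n$ is independently a critical point of $V$. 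The non-degenerate critical point hypothesis on $V$ is exactly what makes the implicit function theorem applicable at this limit.

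The key steps, in order, would be: (i) Identify the substratum potential $V$ — which is almost-periodic by assumption, $V(\theta) = \widehat V(\theta\alpha)$ — and verify the regularity/almost-periodicity conditions required by Theorem~\ref{thm:main}; since $V \in C^2$ with a non-degenerate critical point $\theta_*$ (so $V'(\theta_*) = 0$, $V''(\theta_*) \ne 0$), the ``anti-integrable configuration'' $u_n \equiv \theta_*$ is available as a seed. (ii) Fix the target rotation number $\rho \in \mathbb{R}$; because $V'$ is almost-periodic, it has critical points in every ``cell,'' and one selects for each $n$ a critical point $\bar u_n$ of $V$ close to $\rho n$ so that the anti-integrable sequence $\{\bar u_n\}$ has the prescribed mean slope $\rho$ — this uses the recurrence/almost-periodicity of $V'$, and is the analogue of the ``prescribed rotation number'' input in Theorem~\ref{thm:main}. (iii) Apply Theorem~\ref{thm:main} to obtain, for $\lambda > \lambda_0$, a locally unique genuine equilibrium $\{u_n\}$ of \eqref{equilibrium} shadowing $\{\bar u_n\}$, with $\sup_n |u_n - \bar u_n|$ small; the boundedness of this difference forces $\lim_{n\to\infty}(u_n - u_0)/n = \lim_{n\to\infty}(\bar u_n - \bar u_0)/n = \rho$. (iv) Invoke Theorems~\ref{thm:hyperbolic}/\ref{thm:DefofHyperbolic} to conclude the equilibrium is hyperbolic, and finally translate ``hyperbolic equilibrium of \eqref{equilibrium}'' into ``hyperbolic orbit of the map'' by the standard identification: \eqref{equilibrium} is the orbit equation of $(x,p) \mapsto (x + p + \lambda V'(x),\, p + \lambda V'(x))$ with $p_n = u_n - u_{n-1}$, and hyperbolicity of the configuration in the sense of the linearized difference operator having a bounded inverse is equivalent to hyperbolicity of the corresponding orbit of this exact symplectic twist map (cf. \cite{Goroff, Bolotin_2015}).

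The main obstacle I expect is step (ii): ensuring that one can realize \emph{every} real $\rho$, not just a dense set, as the mean slope of an anti-integrable configuration built from critical points of an almost-periodic $V$. For a single non-degenerate critical point $\theta_*$ of a $2\pi$-periodic $V$ the critical set is $\theta_* + 2\pi\mathbb{Z}$, a lattice, and one gets all rotation numbers by a pigeonhole/greedy choice of which lattice point to sit at in cell $n$; in the almost-periodic case the critical set is a Delone-type set that is ``dense on average,'' and one needs a uniform-density statement (a Birkhoff-type average for the almost-periodic structure) to guarantee that the cumulative choices can be steered to have exact asymptotic slope $\rho$. This is presumably handled inside Theorem~\ref{thm:main}'s hypotheses (the theorem is stated to produce equilibria ``with any prescribed rotation number''), so in the write-up the real work is to \emph{package} the short-range model as an instance of the abstract framework and verify the (mild) $C^2$ and non-degeneracy conditions; everything else is then a citation to Theorems~\ref{thm:main}, \ref{thm:hyperbolic}, and~\ref{thm:DefofHyperbolic}.
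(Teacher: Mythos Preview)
Your proposal is correct and matches the paper's approach exactly: verify that $V'$ satisfies the Aubry criterion via Theorem~\ref{thm:APisAubry}, apply Theorem~\ref{thm:main} with $\Delta$ the discrete Laplacian (for which $K(\rho,r+R)=4$ and $\Delta|_{\operatorname{Hom}}=0$ by Lemma~\ref{lemma:Delta} with $I(x)=x^2/2$, so $\lambda_0=4(r+R)/(rm)$ depends only on $V$ and not on $\rho$), then invoke Theorems~\ref{thm:hyperbolic} and~\ref{thm:DefofHyperbolic}. Your worry about step~(ii) dissolves once you notice that condition~(i) of the Aubry criterion is precisely relative density of the set $O$ of good critical points: taking $\bar u_n\in O$ nearest to $\rho n$ gives $|\bar u_n-\rho n|\le R$ uniformly in $n$, so every real $\rho$ is realized exactly as the asymptotic slope, and no Birkhoff-type averaging argument is needed.
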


The proof strategy is as follows:
\begin{itemize}
\item we first provide a verifiable criterion in Section~\ref{sec:preliminary}, called the Aubry criterion (see Definition~\ref{def:Aubry criterion}), which a modification of the extensions in \cite[Section 5]{Aubry1990} (see also \cite[Section 4]{Bolotin_2015});

\item we then establish a unified framework on anti-integrable limits for functions satisfying the Aubry criterion in Section~\ref{sec:main}, that is, we  show that if the derivative of potential $V$ in \eqref{equilibrium} satisfies the Aubry criterion,  the anti-integrable limits exist for large enough parameter $\lambda$;

\item consequently, we verify  in Section~\ref{sec:EgOfAubry} that periodic functions, pattern-equivariant functions and almost-periodic functions satisfy the Aubry criterion;

\item we finally we prove the existence of a uniformly hyperbolic structure on the orbits constructed from the anti-integrable limits for functions  satisfying the Aubry criterion in Section~\ref{sec:hyperbolic}.
\end{itemize}

%

\section{Preliminary}\label{sec:preliminary}
\subsection{Invariant operators}

Let $d>0$ and $s>0$ be two integers.  Consider the space $\mathcal{C}$, which consists of all functions from $\mathbb{Z}^s$ to $\mathbb{R}^d$. We refer to  the elements in $\mathcal{C}$ as  configurations, and typically write $u_i:=u(i)$ for $i\in\mathbb{Z}^s$ and $u\in\mathcal{C}$. Let $\operatorname{Hom}(\mathbb{Z}^s,\mathbb{R}^d)$ be the subspace of $\mathcal{C}$ containing all homomorphisms in $\mathcal{C}$.

Let $\|\cdot\|$ represent the standard norm on $\mathbb{R}^d$. The interior and boundary of a subset $A\subset\mathbb{R}^d$ are denoted by $\operatorname{Int}(A)$ and $\partial A$, respectively.  The open ball centered at $x$ with radius $r$ in $\mathbb{R}^d$ is denoted by by $B_r(x)$, and  the closed ball is denoted by $\overline{B}_r(x)$.  For simplicity, the symbol  $0$  is used interchangeably to represent the zero natural number, the zero vector in  $\mathbb{R}^d$ , and the zero configuration in  $\mathcal{C}$. 

We define  an extended metric\footnote{An extended metric is a metric so as to allow the distance function $d$ to attain the value $+\infty$.} on $\mathcal{C}$ as follows:\begin{align*}
    d(u,u')=\sup_{i\in\mathbb{Z}^s}\|u_i-u'_i\|.
\end{align*} 

\begin{remark}
  \begin{enumerate}
    \item [(i)] If we focus only on the topological properties of an extended metric space, the extended metric $d$ can be replaced by its bounded counterpart 
    \begin{align*}
      \bar{d}=\min\{d,1\}.
    \end{align*}
    The topology defined by $\bar{d}$ is equivalent to that defined by $d$. However, properties related to boundedness may no longer hold under this analogy.
     \item [(ii)] Another way to transform the extended metric  $d$  into a proper metric is to restrict the distance function to a subspace of  $\mathcal{C}$. This approach will be utilized in the proof of Theorem \ref{thm:main} to ensure the applicability of Banach’s fixed point theorem. 
       \end{enumerate}
\end{remark}

Let $S^k:\mathcal{C}\rightarrow\mathcal{C}$ denote the shift operator along $k\in\mathbb{Z}^s$, defined as $S^k(u)_i=u_{i+k}$ for all $i\in\mathbb{Z}^s$. Similarly, let $T^c:\mathcal{C}\rightarrow\mathcal{C}$ denote the translation operator along the vector $c\in\mathbb{R}^d$, defined as $T^c(u)_i=u_i+c$ for all $i\in\mathbb{Z}^s$. 

\begin{definition}[Invariant operator]
  \begin{enumerate}
    \item [(i)] An operator $\Delta:\,\mathcal{C}\rightarrow\mathcal{C}$ is called \emph{horizontally invariant} if it commutes  with the shift operator, i.e., \begin{align*}
      \Delta\circ S^k(u)=S^k\circ \Delta(u), \quad \forall u\in\mathcal{C}, k\in\mathbb{Z}^s.
  \end{align*}
  \item [(ii)] An operator $\Delta:\,\mathcal{C}\rightarrow\mathcal{C}$ is called \emph{vertically invariant} if it is invariant under translations, i.e., \begin{align*}
    \Delta\circ T^c(u)=\Delta(u), \quad \forall u\in\mathcal{C}, c\in\mathbb{R}^d.
\end{align*}
  \item [(iii)] An operator $\Delta:\,\mathcal{C}\rightarrow\mathcal{C}$ is called invariant if it is both horizontally and vertically invariant.
  \end{enumerate}
\end{definition}

\begin{lemma}
    If $\Delta$ is an invariant operator, then for any $\rho\in\operatorname{Hom}(\mathbb{Z}^s,\mathbb{R}^d)$, $\Delta(\rho)$ is a constant sequence. That is, $\Delta(\rho)_k=\Delta(\rho)_{0}$ for all $k\in\mathbb{Z}^s$.
\end{lemma}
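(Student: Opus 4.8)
The plan is to exploit the defining property of a homomorphism $\rho\in\operatorname{Hom}(\mathbb{Z}^s,\mathbb{R}^d)$, namely $\rho_{i+k}=\rho_i+\rho_k$ for all $i,k\in\mathbb{Z}^s$ (in particular $\rho_0=0$), to convert the action of a shift on $\rho$ into the action of a translation. Concretely, I would first observe that for every fixed $k\in\mathbb{Z}^s$ and every $i\in\mathbb{Z}^s$,
\[
S^k(\rho)_i=\rho_{i+k}=\rho_i+\rho_k=T^{\rho_k}(\rho)_i,
\]
so that $S^k(\rho)=T^{\rho_k}(\rho)$ as elements of $\mathcal{C}$. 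This is the one substantive step; everything else is formal bookkeeping with the invariance hypotheses.

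Next I would apply $\Delta$ to both sides of $S^k(\rho)=T^{\rho_k}(\rho)$. Using horizontal invariance on the left and vertical invariance on the right,
\[
S^k\bigl(\Delta(\rho)\bigr)=\Delta\bigl(S^k(\rho)\bigr)=\Delta\bigl(T^{\rho_k}(\rho)\bigr)=\Delta(\rho).
\]
Thus $\Delta(\rho)$ is fixed by every shift operator $S^k$, which unwinds to $\Delta(\rho)_{i+k}=\Delta(\rho)_i$ for all $i,k\in\mathbb{Z}^s$. Specializing $i=0$ gives $\Delta(\rho)_k=\Delta(\rho)_0$ for all $k\in\mathbb{Z}^s$, which is exactly the claim.

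I do not anticipate any real obstacle here; the only thing to be careful about is not to conflate the additive structure of $\mathbb{Z}^s$ (where $\rho$ lives as a homomorphism) with the translations $T^c$ on the target $\mathbb{R}^d$, and to note that $\rho_k\in\mathbb{R}^d$ is precisely the translation vector that makes the identity $S^k(\rho)=T^{\rho_k}(\rho)$ work. If one wanted, the conclusion could be restated as: an invariant operator sends $\operatorname{Hom}(\mathbb{Z}^s,\mathbb{R}^d)$ into the constant configurations, which is the form in which it will presumably be used when prescribing rotation vectors.
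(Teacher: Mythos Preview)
Your argument is correct and is essentially the same as the paper's: both hinge on the identity $S^k(\rho)=T^{\rho_k}(\rho)$ for $\rho\in\operatorname{Hom}(\mathbb{Z}^s,\mathbb{R}^d)$, then apply horizontal and vertical invariance to get $S^k(\Delta(\rho))=\Delta(\rho)$ and evaluate at $0$. The only cosmetic difference is that you spell out $S^k(\rho)=T^{\rho_k}(\rho)$ before applying $\Delta$, whereas the paper writes the chain $\Delta(\rho)=\Delta\circ T^{\rho(k)}(\rho)=\Delta\circ S^k(\rho)=S^k\circ\Delta(\rho)$ directly.
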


\begin{proof}
    For any $k\in\mathbb{Z}^s$, since $\Delta$ is invariant, we have:\begin{align*}
        \Delta(\rho)=\Delta\circ T^{\rho(k)}(\rho)=\Delta\circ S^k(\rho)=S^k\circ\Delta (\rho).
    \end{align*}
    Taking the value at $0$, we obtain \begin{align*}
        \Delta(\rho)_0=\left[S^k\circ\Delta (\rho)\right]_0=\Delta(\rho)_k.
    \end{align*}
\end{proof}
Using the proceeding lemma, we can define a function \begin{align*}
    \left.\Delta\right|_{\operatorname{Hom}}:\,\operatorname{Hom}(\mathbb{Z}^s,\mathbb{R}^d)\rightarrow\mathbb{R}^d\\ 
    \rho\mapsto \Delta(\rho)_0
\end{align*}

\begin{definition}[Lipschitz continuous operator]
  Let $\mathcal{C}'\subset\mathcal{C}$ be a subset of $\mathcal{C}$. An operator $\Delta:\,\mathcal{C}\rightarrow\mathcal{C}$ is said to be  \emph{Lipschitz continuous} on $\mathcal{C}'$ if there exists $K>0$  such that $d(\Delta u,\Delta u')\leq K d(u,u')$ for all $u,u'\in\mathcal{C}'$.
\end{definition}

An important case is when $\Delta$ is Lipschitz continuous on the closed ball $\{u\mid d(u,\rho)\leq R\}$, where $\rho\in\operatorname{Hom}(\mathbb{Z}^s,\mathbb{R}^d)$ and $R>0$.  The Lipschitz constant on this closed ball is denoted by $K(\rho,R)$.

\begin{example}[Long-range Frenkel-Kontorova models]
  Let $s=d=1$. Consider the long range cubic interaction given by: \begin{align*}
    \Delta(u)_i=\sum_{k\in\mathbb{Z}}2^{-|k|}(u_i-u_{i+k})^3,\quad \forall i\in\mathbb{Z}.
  \end{align*}
  It is easy to verify that $\Delta$ is invariant. To show that  $\Delta$ is Lipschitz continuous on  $\{u\mid d(u,\rho)\leq R\}$, observe that: \begin{align*}
    &\quad\,\|\Delta(u)_i-\Delta(u')_i\|\\ 
    &\leq  \sum_{k\in\mathbb{Z}}2^{-|k|}\|(u_i-u_{i+k})^3-(u_i'-u_{i+k}')^3\|\\ 
    &\leq \sum_{k\in\mathbb{Z}}2^{-|k|}2d(u,u')(\|u_i-u_{i+k}\|^2+\|u_i-u_{i+k}\|\cdot\|u_i'-u_{i+k}'\|
    +\|u_i'-u_{i+k}'\|^2)\\ 
    &\leq \sum_{k\in\mathbb{Z}}2^{-|k|}2d(u,u')3(k|\rho|+2R)^2\\ 
    &=\left[\sum_{k\in\mathbb{Z}}\frac{6(k|\rho|+2R)^2}{2^{|k|}}\right]d(u,u')
  \end{align*}
  for all $u,u'\in \{u\mid d(u,\rho)\leq R\}$ and $i\in\mathbb{Z}$. Thus, $\Delta$ is Lipschitz continuous on  $\{u\mid d(u,\rho)\leq R\}$ with $K(\rho, R)=\sum_{k\in\mathbb{Z}}\frac{6(k|\rho|+2R)^2}{2^{|k|}}$.
\end{example}

\subsection{Aubry criterion}

\begin{definition}[Pointwise operator]
    An operator $\Psi:\,\mathcal{C}\rightarrow\mathcal{C}$ is called a  \emph{pointwise operator} if there exists a function $\psi:\mathbb{R}^d\rightarrow\mathbb{R}^d$ such that  \begin{align*}
        \Psi(u)_i=\psi(u_i)
    \end{align*}
    for all $i\in\mathbb{Z}^s$  and $u\in\mathcal{C}$. The function $\psi$ is referred to as the \emph{pointwise function} of the operator  $\Psi$.
\end{definition}

\begin{definition}[Aubry criterion]\label{def:Aubry criterion}
  A continuous function $\psi:\mathbb{R}^d\rightarrow\mathbb{R}^d$ is said to satisfy \emph{Aubry criterion} if  there exists a subset $O\subset \mathbb{R}^d$ and constants $R>0$, $r>0$ and $m>0$  such that: \begin{enumerate}
          \item [(i)]  every ball with radius  $R$ (whatever be its center)  contains at least one point in $O$;
          \item [(ii)] $\psi(z)=0,\,\forall z\in O$;
          \item [(iii)] for any $z\in O$ and $x,y\in\overline{B}_r(z)$, $\|\psi(x)-\psi(y)\|\geq m\|x-y\|$.
      \end{enumerate}
\end{definition}

\begin{remark}
\begin{enumerate}
	\item [(i)]The concept of the Aubry criterion was first introduced in \cite{Aubry1990}. Several other works, such as \cite{Mackay_1992}, \cite{Trevino2019}, and \cite{Bolotin_2015}, have explored similar or generalized notions. Our criterion encompasses the class of functions described in these papers. Figure 1 illustrates a typical example of a one-dimensional function that satisfies the Aubry criterion. For additional examples, refer to Section \ref{sec:EgOfAubry}.
	\item [(ii)] If $\psi$ satisfies the Aubry criterion with parameters $O, R,r$ and $m$, then $\psi$ admits a $1/m$-Lipschitz local inverse near any point of $O$ (see Proposition \ref{prop:Lipschitz}).
\end{enumerate}
\end{remark}

\begin{figure}[H]
  \centering

\tikzset{every picture/.style={line width=0.75pt}} 

\begin{tikzpicture}[x=0.75pt,y=0.75pt,yscale=-0.75,xscale=0.75]

\draw [line width=1.5]    (193.09,148.06) .. controls (206.09,127.06) and (200.09,109.06) .. (215.09,97.06) .. controls (230.09,85.06) and (218.01,72.07) .. (244.01,55.07) ;
\draw [line width=1.5]    (332.01,120.16) .. controls (353.01,117.16) and (346.58,111.05) .. (361.79,97.61) .. controls (377.01,84.16) and (361.01,59.16) .. (387.01,42.16) ;
\draw [line width=1.5]    (77.01,53.16) .. controls (96.01,84.16) and (76.17,86.97) .. (104.09,98.06) .. controls (132.01,109.16) and (117.01,120.16) .. (132.01,127.16) ;
\draw  (17.17,98.18) -- (649.17,98.18)(334.49,28) -- (334.49,148.32) (642.17,93.18) -- (649.17,98.18) -- (642.17,103.18) (329.49,35) -- (334.49,28) -- (339.49,35)  ;
\draw [color={rgb, 255:red, 208; green, 2; blue, 27 }  ,draw opacity=1 ][line width=3]    (75.58,98.89) -- (134.01,98.32) ;
\draw  [color={rgb, 255:red, 74; green, 99; blue, 226 }  ,draw opacity=1 ][fill={rgb, 255:red, 74; green, 144; blue, 226 }  ,fill opacity=1 ] (100,98.06) .. controls (100,95.8) and (101.83,93.98) .. (104.09,93.98) .. controls (106.34,93.98) and (108.17,95.8) .. (108.17,98.06) .. controls (108.17,100.32) and (106.34,102.15) .. (104.09,102.15) .. controls (101.83,102.15) and (100,100.32) .. (100,98.06) -- cycle ;
\draw [color={rgb, 255:red, 208; green, 2; blue, 27 }  ,draw opacity=1 ][line width=3]    (186.58,97.89) -- (245.01,97.32) ;
\draw  [color={rgb, 255:red, 74; green, 99; blue, 226 }  ,draw opacity=1 ][fill={rgb, 255:red, 74; green, 144; blue, 226 }  ,fill opacity=1 ] (211,97.06) .. controls (211,94.8) and (212.83,92.98) .. (215.09,92.98) .. controls (217.34,92.98) and (219.17,94.8) .. (219.17,97.06) .. controls (219.17,99.32) and (217.34,101.15) .. (215.09,101.15) .. controls (212.83,101.15) and (211,99.32) .. (211,97.06) -- cycle ;

\draw  [color={rgb, 255:red, 74; green, 99; blue, 226 }  ,draw opacity=1 ][fill={rgb, 255:red, 74; green, 144; blue, 226 }  ,fill opacity=1 ] (531,39.06) .. controls (531,36.8) and (532.83,34.98) .. (535.09,34.98) .. controls (537.34,34.98) and (539.17,36.8) .. (539.17,39.06) .. controls (539.17,41.32) and (537.34,43.15) .. (535.09,43.15) .. controls (532.83,43.15) and (531,41.32) .. (531,39.06) -- cycle ;
\draw [color={rgb, 255:red, 208; green, 2; blue, 27 }  ,draw opacity=1 ][line width=3]    (481.58,63.89) -- (540.01,63.32) ;
\draw [color={rgb, 255:red, 208; green, 2; blue, 27 }  ,draw opacity=1 ][line width=3]    (332.58,97.89) -- (391.01,97.32) ;
\draw  [color={rgb, 255:red, 74; green, 99; blue, 226 }  ,draw opacity=1 ][fill={rgb, 255:red, 74; green, 144; blue, 226 }  ,fill opacity=1 ] (357,97.06) .. controls (357,94.8) and (358.83,92.98) .. (361.09,92.98) .. controls (363.34,92.98) and (365.17,94.8) .. (365.17,97.06) .. controls (365.17,99.32) and (363.34,101.15) .. (361.09,101.15) .. controls (358.83,101.15) and (357,99.32) .. (357,97.06) -- cycle ;

\draw [line width=1.5]  [dash pattern={on 1.69pt off 2.76pt}]  (132.01,127.16) .. controls (142.01,133.32) and (141.01,42.32) .. (148.01,42.32) .. controls (155.01,42.32) and (146.01,134.32) .. (164.01,115.32) .. controls (182.01,96.32) and (168.09,173.06) .. (193.09,148.06) ;
\draw [line width=1.5]  [dash pattern={on 1.69pt off 2.76pt}]  (24.01,45.32) .. controls (27.01,26.32) and (67.01,28.32) .. (77.01,53.16) ;
\draw [line width=1.5]  [dash pattern={on 1.69pt off 2.76pt}]  (244.01,55.07) .. controls (266.01,44.32) and (261.01,151.32) .. (266.01,152.32) .. controls (271.01,153.32) and (274.01,22.32) .. (276.01,21.32) .. controls (278.01,20.32) and (281.01,140.32) .. (289.01,137.32) .. controls (297.01,134.32) and (293.01,29.32) .. (299.01,26.32) .. controls (305.01,23.32) and (297.01,153.32) .. (312.01,146.32) .. controls (327.01,139.32) and (317.01,123.32) .. (332.01,120.16) ;
\draw [line width=1.5]  [dash pattern={on 1.69pt off 2.76pt}]  (387.01,42.16) .. controls (409.01,26.32) and (420.01,87.32) .. (461.01,56.32) ;

\draw (548,29) node [anchor=north west][inner sep=0.75pt]   [align=left] {$\displaystyle O$};
\draw (548,55) node [anchor=north west][inner sep=0.75pt]   [align=left] {$\displaystyle O+[ -r,r]$};

\end{tikzpicture}

\caption{The partial graph of a one-dimensional function satisfying the Aubry criterion is illustrated. The portion within $O+[-r,r]$ is  represented by a solid line, while the portion outside $O+[-r,r]$ is depicted with a dotted line.}
\end{figure}
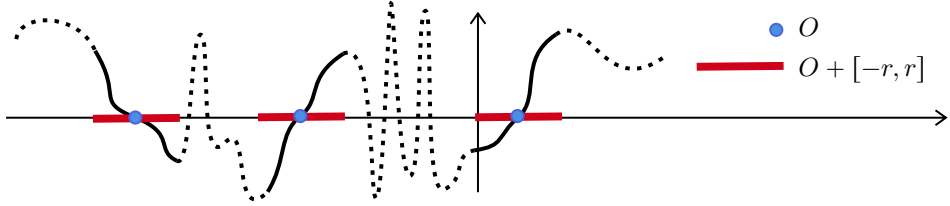

\begin{proposition}\label{prop:Lipschitz}
    Let $\psi$ be a continuous function satisfying the Aubry criterion with parameters $O,R,r,m$. For any $z\in O$, there exists a function $\varphi_z:\overline{B}_{rm}(0)\rightarrow \overline{B}_r(z)$ such that \begin{enumerate}
      \item [(i)] $\psi\circ \varphi_z(x)=x,$ for all $x\in \overline{B}_{rm}(0).$
      \item [(ii)] $ \|\varphi_z(x)-\varphi_z(y)\|\leq \frac{1}{m}\|x-y\|$, for all $x,y\in \overline{B}_{rm}(0)$.
    \end{enumerate}
\end{proposition}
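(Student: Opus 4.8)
The plan is to realize $\varphi_z$ as the (unique) inverse branch of $\psi$ on the ball $\overline{B}_r(z)$. First note that condition (iii) of the Aubry criterion forces $\psi|_{\overline{B}_r(z)}$ to be injective, since $\|\psi(x)-\psi(y)\|\ge m\|x-y\|>0$ whenever $x\ne y$; being a continuous injection on a compact set, it is a homeomorphism onto its image $\psi(\overline{B}_r(z))$. The entire content of the proposition is then the inclusion $\overline{B}_{rm}(0)\subseteq\psi(\overline{B}_r(z))$: granting it, one defines $\varphi_z:=\bigl(\psi|_{\overline{B}_r(z)}\bigr)^{-1}$ restricted to $\overline{B}_{rm}(0)$, so that (i) holds by construction, and (ii) follows by feeding $a=\varphi_z(x)$, $b=\varphi_z(y)\in\overline{B}_r(z)$ into (iii): $\|x-y\|=\|\psi(a)-\psi(b)\|\ge m\|a-b\|=m\|\varphi_z(x)-\varphi_z(y)\|$.

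To establish $\overline{B}_{rm}(0)\subseteq\psi(\overline{B}_r(z))$, first work on the open ball. By condition (ii), $\psi(z)=0$, and for $x$ on the sphere $\partial B_r(z)$ condition (iii) gives $\|\psi(x)\|=\|\psi(x)-\psi(z)\|\ge m r$; hence $\psi(\partial B_r(z))$ avoids the open ball $B_{rm}(0)$. By Brouwer's invariance of domain, $\psi(B_r(z))$ is open in $\mathbb{R}^d$; moreover, since $\psi(\overline{B}_r(z))$ is compact and $\psi$ is injective on $\overline{B}_r(z)$, the set $\psi(B_r(z))=\psi(\overline{B}_r(z))\setminus\psi(\partial B_r(z))$ is closed in the open set $U:=\mathbb{R}^d\setminus\psi(\partial B_r(z))$. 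Thus $\psi(B_r(z))$ is a nonempty clopen subset of $U$, i.e.\ a union of connected components of $U$. Since $B_{rm}(0)$ is a connected subset of $U$ meeting $\psi(B_r(z))$ at the point $0=\psi(z)$, it lies wholly inside $\psi(B_r(z))$. Passing to closures, $\overline{B}_{rm}(0)=\overline{B_{rm}(0)}\subseteq\overline{\psi(B_r(z))}\subseteq\psi(\overline{B}_r(z))$, the last set being compact, hence closed.

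An alternative route for the surjectivity step uses the Brouwer degree: for fixed $w\in B_{rm}(0)$ the map $x\mapsto\psi(x)-tw$ has no zero on $\partial B_r(z)$ for any $t\in[0,1]$, because there $\|\psi(x)-tw\|\ge mr-t\|w\|\ge mr-\|w\|>0$; by homotopy invariance $\deg(\psi-w,B_r(z),0)=\deg(\psi,B_r(z),0)$, which equals $\pm1$ since $\psi$ is a local homeomorphism near $z$ and $z$ is the only preimage of $0$ in $\overline{B}_r(z)$; a nonzero degree yields $x\in B_r(z)$ with $\psi(x)=w$, and the case $\|w\|=mr$ is handled by a limiting/compactness argument.

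I expect the main obstacle to be precisely this surjectivity claim. Injectivity, the definition of $\varphi_z$, and the Lipschitz bound (ii) are essentially tautological consequences of the Aubry criterion, but showing that the image $\psi(\overline{B}_r(z))$ genuinely covers the ball $\overline{B}_{rm}(0)$ — rather than merely being large in a metric sense — requires a topological input (invariance of domain, or degree theory), which is the one non-formal ingredient in the argument.
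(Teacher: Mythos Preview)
Your proof is correct and follows essentially the same route as the paper: injectivity from (iii), invariance of domain to get openness of $\psi(B_r(z))$, a closedness argument in $\mathbb{R}^d\setminus\psi(\partial B_r(z))$, and then connectedness of $B_{rm}(0)$ to force the inclusion. The only cosmetic difference is that the paper invokes the Jordan--Brouwer separation theorem to identify $\psi(B_r(z))$ with the bounded complementary component of $\psi(\partial B_r(z))$, whereas your clopen-plus-connectedness argument bypasses that step entirely; your version is in fact slightly leaner.
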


Before proving Proposition \ref{prop:Lipschitz}, we first establish the following topological lemma.

\begin{lemma}\label{lemma:PointSetTop}
  Let $K$ be a compact subset of $\mathbb{R}^d$. If $f:K\rightarrow\mathbb{R}^d$ is continuous and injective, then \begin{enumerate}
  	\item [(i)]$f(\operatorname{Int}(K))\subset \operatorname{Int}[f(K)]$;
  	\item [(ii)] $\partial [f(K)]\subset f(\partial K)$. 
  \end{enumerate}
  \end{lemma}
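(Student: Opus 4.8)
The plan is to deduce both statements from Brouwer's invariance of domain, which asserts that a continuous injective map from an open subset of $\mathbb{R}^d$ to $\mathbb{R}^d$ is open (and hence a homeomorphism onto its image). The subtlety is that $f$ is only defined and injective on the compact set $K$, not on an open neighborhood, so one must argue carefully on the relatively open set $\operatorname{Int}(K)$, which \emph{is} open in $\mathbb{R}^d$.

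For part (i): restrict $f$ to $\operatorname{Int}(K)$. This is a continuous injective map defined on the open set $\operatorname{Int}(K)\subset\mathbb{R}^d$, so by invariance of domain $f(\operatorname{Int}(K))$ is open in $\mathbb{R}^d$. Since $f(\operatorname{Int}(K))\subset f(K)$ and an open subset of $f(K)$ is contained in $\operatorname{Int}[f(K)]$, we get $f(\operatorname{Int}(K))\subset\operatorname{Int}[f(K)]$.

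For part (ii): let $y\in\partial[f(K)]$. Since $K$ is compact, $f(K)$ is compact, hence closed, so $y\in f(K)$; write $y=f(x)$ with $x\in K$. I claim $x\in\partial K$. Indeed, if instead $x\in\operatorname{Int}(K)$, then by part (i) $y=f(x)\in f(\operatorname{Int}(K))\subset\operatorname{Int}[f(K)]$, contradicting $y\in\partial[f(K)]$. Hence $x\in K\setminus\operatorname{Int}(K)=\partial K$ (using that $K$ is closed so $\partial K = K\setminus\operatorname{Int}(K)$), and therefore $y=f(x)\in f(\partial K)$.

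The main obstacle, such as it is, is purely a matter of bookkeeping: making sure that invariance of domain is applied to a genuinely open subset of $\mathbb{R}^d$ (namely $\operatorname{Int}(K)$) rather than to $K$ itself, and correctly relating "open in $f(K)$" to "contained in $\operatorname{Int}[f(K)]$" versus "open in $\mathbb{R}^d$". No estimates are needed; the entire content is the invariance of domain theorem plus elementary point-set topology.
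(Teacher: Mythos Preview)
Your proposal is correct and follows essentially the same approach as the paper: both use Brouwer's invariance of domain for part (i) and then deduce (ii) from (i) by contradiction, using that $K$ closed implies $K=\operatorname{Int}(K)\cup\partial K$. Your argument for (ii) is in fact slightly more streamlined than the paper's, which first produces the preimage $x$ via a sequence-and-subsequence argument rather than simply observing that $f(K)$ is closed so $\partial[f(K)]\subset f(K)$.
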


\begin{proof}
  \begin{enumerate}
    \item [(i)] Let $y\in f(\operatorname{Int}(K))$. Then there exists a unique $x\in\operatorname{Int}(K)$ such that $f(x)=y$. Since $x\in\operatorname{Int}(K)$, there exists a ball $B_r(x)\subset \operatorname{Int}(K)$. By Brouwer's invariance of domain, $f(B_r(x))\subset f(K)$ is open. Since $y=f(x)\in f(B_r(x))$, we conclude that $y\in  \operatorname{Int}[f(K)]$.
    \item [(ii)] Let $y\in \partial [f(K)]$.  Then there exists a sequence $\{y_k\}_{k\in\mathbb{N}}\subset f(K)$ such that $y_k\rightarrow y$. Since $f$ is injective, we can define the sequence $\{x_k:=f^{-1}(y_k)\}_{k\in\mathbb{N}}\subset K$. As $K$ is compact, there exists a subsequence $\{x_{k_n}\}_{n\in\mathbb{N}}$ converges to some $x\in K$. By the continuity of $f$, we have $f(x)=y$. Next, we show that $x\in\partial K$. Since $K$ is compact, it is closed, so $K=\operatorname{Int}(K)\cup \partial K$. If $x\in \operatorname{Int}(K)$, then $y\in f(\operatorname{Int}(K))\subset \operatorname{Int}[f(K)]$ (from (i)), which contracts the fact that $y\in \partial [f(K)]$. Therefore, $x\not\in\operatorname{Int}(K)$, implying $x\in \partial K$.
  \end{enumerate}
  This concludes the proof.
\end{proof}

\begin{proof}[Proof of Proposition \ref{prop:Lipschitz}]
  Let $z\in O$ and $x,y\in \overline{B}_r(z)$. Since $\psi$ satisfies the Aubry criterion, we have
  \begin{align}\label{eq:InverseLipschitz}
    \|\psi(y)-\psi(x)\|\geq m\|y-x\|.
  \end{align}
    Thus, $\psi(x)=\psi(y)$ if and only if $x=y$. That is, $\psi$ is injective on $\overline{B}_r(z)$. 
    Since $\psi$ is continuous and injective, by the Jordan-Brouwer separation theorem, the image $S:=\psi(\partial B_r(z))$ of the boundary sphere  divides $\mathbb{R}^d\setminus S$ into two connected components $V_1$ and $V_2$, with $S=\partial V_1=\partial V_2$.  Let $V_1$ denote the bounded component. We claim that $V_1=\psi(B_r(z))$: \begin{enumerate}
    	\item [(a)]on  one hand, $\psi(B_r(z))$ is open in $\mathbb{R}^d\setminus S$ by Brouwer's invariance of domain;
    	\item [(b)]on the other hand, $\psi(B_r(z))$ is closed in $\mathbb{R}^d-S$ since  its boundary $\partial \left[\psi(B_r(z))\right]\subset S$ by Lemma \ref{lemma:PointSetTop}.
    \end{enumerate}   
    Thus, by the boundedness and connectedness of $V_1$, we conclude that $V_1=\psi(B_r(z))$. Next, by the  inequality \eqref{eq:InverseLipschitz}, we have: \begin{align*}
      S\cap  B_{rm}(0)=\emptyset.
    \end{align*}
    Indeed, suppose  there exists $p\in S\cap  B_{rm}(0)$. Then: \begin{align*}
      mr=m\|\psi^{-1}(p)-z\|\leq \|p-\psi(z)\|=\|p\|<mr,
    \end{align*}
    which is a contradiction.
    Thus $B_{rm}(0)$ is contained entirely in either $V_1$ or $V_2$.
    Since $0\in B_{rm}(0)$ and $ 0=\psi(z)\in V_1$, we conclude $B_{rm}(0)\subset V_1$. Hence, $\overline{B}_{rm}(0)\subset\overline{V_1}=V_1\cup S=\psi(\overline{B}_r(z))$.
    Since $\psi$ is injective on $\overline{B}_r(z)$ and $\overline{B}_{rm}(0)\subset \psi(\overline{B}_r(z))$, we can define the local inverse of $\psi$:\begin{align*}
        \varphi_z: \overline{B}_{rm}(0)\rightarrow \overline{B}_r(z)
    \end{align*}
    For all $X,Y\in \overline{B}_{rm}(0)$, their preimages $\varphi_z(X),\varphi_z(Y)\in \overline{B}_r(z)$. By the inequality \eqref{eq:InverseLipschitz}, we have: \begin{align*}\label{eq:LipschitzVarphi}
        \|\varphi_z(X)-\varphi_z(Y)\|\leq \frac{1}{m}\|X-Y\|.
    \end{align*}
    This completes the proof of Proposition \ref{prop:Lipschitz}.
\end{proof}

\section{Main theorem and its proof}\label{sec:main}
\begin{theorem}\label{thm:main}
    Let $\Delta$ be an invariant operator, and let $\Psi$ be a pointwise operator with a pointwise function $\psi$. Suppose the following conditions hold:\begin{enumerate}
        \item [(i)] $\Delta$ is Lipschitz continuous on any closed ball  $\{u\mid d(u,\rho)\leq R\}$, with a Lipschitz constant $K(\rho,R)$, where $\rho\in\operatorname{Hom}(\mathbb{Z}^s,\mathbb{R}^d)$;
        \item [(ii)] $\psi$ satisfies the Aubry criterion with parameters $O,R,r,m$.
    \end{enumerate}
    Then, for all $\rho\in \operatorname{Hom}(\mathbb{Z}^s,\mathbb{R}^d)$ and $\lambda\geq \frac{1}{rm}\left(K(\rho,r+R)\cdot(r+R)+\|\left.\Delta\right|_{\operatorname{Hom}}(\rho)\|\right)$, there exists a configuration $u$ such that: \begin{align*}
      (\Delta+\lambda \Psi)u=0,  \quad\sup_{i\in\mathbb{Z}^s}\inf_{z\in O}\|u_i-z\|\leq r\quad\text{ and }\quad  d(u,\rho)\leq r+R.
  \end{align*}
  Moreover, the configuration $u$ is unique in the sense that, if $(\Delta+\lambda \Psi)u'=0$ and for all $i\in\mathbb{Z}^s$ there exists $x\in O$ such that $u_i,u_i'\in \overline{B}_r(x)$, then $u=u'$.
\end{theorem}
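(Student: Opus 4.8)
The plan is to turn the system $(\Delta+\lambda\Psi)u=0$, which reads $\psi(u_i)=-\tfrac1\lambda\Delta(u)_i$ for every $i\in\mathbb Z^s$, into a fixed-point problem for a contraction. Using part (i) of the Aubry criterion, for each $i\in\mathbb Z^s$ choose a point $z_i\in O\cap\overline B_R(\rho(i))$, so that $\|z_i-\rho(i)\|\le R$, and let $\varphi_{z_i}\colon\overline B_{rm}(0)\to\overline B_r(z_i)$ be the $\tfrac1m$-Lipschitz local inverse of $\psi$ furnished by Proposition~\ref{prop:Lipschitz}. On the tube
\[
\mathcal C'=\bigl\{\,u\in\mathcal C\ :\ u_i\in\overline B_r(z_i)\ \text{for all }i\in\mathbb Z^s\,\bigr\}
\]
define $\mathcal F$ by $\mathcal F(u)_i=\varphi_{z_i}\!\bigl(-\tfrac1\lambda\Delta(u)_i\bigr)$. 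Applying $\psi$ and using Proposition~\ref{prop:Lipschitz}(i) shows that any fixed point $u=\mathcal F(u)$ satisfies $\psi(u_i)=-\tfrac1\lambda\Delta(u)_i$, i.e. solves the equilibrium equation. The crucial structural remark is that on $\mathcal C'$ the extended metric $d$ is an honest, bounded metric (diameter $\le 2r$) and that $\mathcal C'$ is complete, so Banach's fixed-point theorem applies once $\mathcal F$ is shown to be a contracting self-map of $\mathcal C'$.

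To see that $\mathcal F$ maps $\mathcal C'$ into itself, observe that for $u\in\mathcal C'$ one has $u_i\in\overline B_r(z_i)\subset\overline B_{r+R}(\rho(i))$, hence $d(u,\rho)\le r+R$ and $\Delta$ is $K(\rho,r+R)$-Lipschitz on a ball containing $u$ and $\rho$. Since $\rho$ is a homomorphism and $\Delta$ is invariant, $\Delta(\rho)$ is the constant sequence with value $\left.\Delta\right|_{\operatorname{Hom}}(\rho)$, so
\[
\|\Delta(u)_i\|\le\|\Delta(u)_i-\Delta(\rho)_i\|+\|\Delta(\rho)_i\|\le K(\rho,r+R)(r+R)+\bigl\|\left.\Delta\right|_{\operatorname{Hom}}(\rho)\bigr\|.
\]
The hypothesis $\lambda\ge\tfrac1{rm}\bigl(K(\rho,r+R)(r+R)+\|\left.\Delta\right|_{\operatorname{Hom}}(\rho)\|\bigr)$ is precisely what guarantees $-\tfrac1\lambda\Delta(u)_i\in\overline B_{rm}(0)$, so $\mathcal F(u)_i$ is defined and lies in $\overline B_r(z_i)$, i.e. $\mathcal F(u)\in\mathcal C'$. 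For contractivity, the $\tfrac1m$-Lipschitz bound on $\varphi_{z_i}$ together with the Lipschitz bound on $\Delta$ gives
\[
\|\mathcal F(u)_i-\mathcal F(u')_i\|\le\frac{1}{m\lambda}\|\Delta(u)_i-\Delta(u')_i\|\le\frac{K(\rho,r+R)}{m\lambda}\,d(u,u'),
\]
and the same lower bound on $\lambda$ yields $\tfrac{K(\rho,r+R)}{m\lambda}\le\tfrac{r}{r+R}<1$, using $R>0$.

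Banach's theorem then produces the unique $u\in\mathcal C'$ with $(\Delta+\lambda\Psi)u=0$; since $u_i\in\overline B_r(z_i)$ with $z_i\in O$ we obtain $\sup_{i}\inf_{z\in O}\|u_i-z\|\le r$ and $d(u,\rho)\le r+R$, as claimed. For the local uniqueness clause, suppose $(\Delta+\lambda\Psi)u'=0$ and for each $i$ there is $x_i\in O$ with $u_i,u_i'\in\overline B_r(x_i)$. Subtracting the two equations gives $\psi(u_i)-\psi(u_i')=-\tfrac1\lambda(\Delta(u)_i-\Delta(u')_i)$, and applying part (iii) of the Aubry criterion at the center $x_i$ yields the pointwise bound $m\|u_i-u_i'\|\le\tfrac1\lambda\|\Delta(u)_i-\Delta(u')_i\|$. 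Both $u$ and $u'$ lie in one common ball about $\rho$ (we already have $d(u,\rho)\le r+R$, whence $d(u',\rho)\le 3r+R$), so the right-hand side is at most $\tfrac1\lambda K(\rho,3r+R)\,d(u,u')$; taking the supremum over $i\in\mathbb Z^s$ and invoking the largeness of $\lambda$ forces $d(u,u')=0$, i.e. $u=u'$.

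The main work is the a priori estimate on $\|\Delta(u)_i\|$: the entire quantitative content of the theorem — in particular the exact threshold on $\lambda$ — hinges on carrying this estimate out on the ball $\overline B_{r+R}(\rho(i))$ and no larger, and, because $\Delta$ may be genuinely long-range, on doing it in the sup-metric $d$ rather than coordinate by coordinate. The two supporting ingredients that make the scheme run are the completeness of the tube $\mathcal C'$ for the restricted metric and the use of the Lipschitz inverse $\varphi_{z_i}$ of Proposition~\ref{prop:Lipschitz} to convert the implicit equation into a genuine contraction; the constant-bookkeeping in the local uniqueness clause is the one remaining delicate point.
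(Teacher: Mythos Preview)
Your existence argument is essentially identical to the paper's: the same centers $z_i\in O\cap\overline B_R(\rho(i))$ (the paper writes $a_i$), the same tube $\mathcal C'$ (the paper's $\Pi$), the same contraction built from the local inverses of Proposition~\ref{prop:Lipschitz}, and the same estimate on $\|\Delta(u)_i\|$ giving the contraction ratio $r/(r+R)$. That part is correct and matches the paper.

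The uniqueness clause, however, has a genuine gap. Your inequality
\[
m\,d(u,u')\ \le\ \frac1\lambda\,K(\rho,3r+R)\,d(u,u')
\]
forces $d(u,u')=0$ only when $m\lambda>K(\rho,3r+R)$, whereas the stated hypothesis bounds $\lambda$ from below by a quantity involving $K(\rho,r+R)$, not $K(\rho,3r+R)$. Since $K(\rho,\cdot)$ can grow without any a~priori control as the radius increases (witness the long-range cubic example in Section~\ref{sec:preliminary}, where $K(\rho,R)$ is quadratic in $R$), the phrase ``invoking the largeness of $\lambda$'' does not close the argument under the theorem's hypotheses. You flag this yourself as the ``remaining delicate point'', and it is indeed a real obstruction to your route.

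The paper handles uniqueness differently: rather than subtracting the equations and appealing to part~(iii) of the Aubry criterion, it asserts that under the hypothesis $u_i,u_i'\in\overline B_r(x_i)$ both configurations already lie in the \emph{original} tube $\Pi$, and then invokes the uniqueness of the Banach fixed point for the contraction $\Phi$ already built --- keeping the Lipschitz constant at $K(\rho,r+R)$ throughout and never passing to a larger ball. (That assertion is itself rather terse in the paper, but it is the route taken there, and it is what lets the paper avoid the constant mismatch you run into.)
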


\begin{proof}
  Let $\rho\in \operatorname{Hom}(\mathbb{Z}^s,\mathbb{R}^d)$. Define a configuration $a:=a(\rho)\in\mathcal{C}$ by
  \begin{align}
    a_i:=\operatorname{argmin}_{x\in O}\left\|x-\rho (i)\right\|
    \end{align} 
    for all $i\in\mathbb{Z}^s$.
From the definition of the Aubry criterion, we have: \begin{align*}
  d(a,\rho)\leq R.
\end{align*}
Next, consider the space \begin{align*}
  \Pi:=\left\{u:d(u,a)\leq r\right\}.
\end{align*}
We equip the space $\Pi$ with the metric $d$. Note that while $d$ is an extended metric on $\mathcal{C}$, its restriction on $\Pi$ is a proper metric. Referring to Theorem 43.5 in \cite{munkres2000topology}, the metric space $(\Pi,d)$ is complete. The following properties hold: $$\forall u\in\Pi,\quad \sup_{i\in\mathbb{Z}^s}\inf_{z\in O}\|u_i-z\|\leq r\quad\text{ and }\quad  d(u,\rho)\leq r+R,$$ which follow directly from the inequalities $d(u,a)\leq r$ and $d(a,\rho)\leq R$. Consequently, $\Delta$ is Lipschitz continuous on $\Pi$ with Lipschitz constant $K(\rho,r+R)$.
We now claim that the operator \begin{align*}
      \Phi: \Pi \rightarrow \Pi, \quad u \mapsto\left(\varphi_{a_i}\left(\frac{\Delta(u)_i}{\lambda}\right)\right)_{i \in \mathbb{Z}^s}
\end{align*}
where $\varphi_{a_i}$ is the local inverse of $\psi$, is both well-defined and contractive.

  For any $u\in \Pi$ and $i\in\mathbb{Z}^s$, we have  \begin{align*}
      &\|\Delta(u)_i\|\\ 
      =&\|(\Delta(u)_i-\Delta(a)_i)+(\Delta(a)_i-\Delta(\rho)_i)+\Delta(\rho)_i\|\\
      \leq& K(\rho,r+R)d(u,a)+ K(\rho,r+R)d(a,\rho)+\|\left.\Delta\right|_{\operatorname{Hom}}(\rho)\|\\
      \leq&K(\rho,r+R)\cdot (r+R)+\|\left.\Delta\right|_{\operatorname{Hom}}(\rho)\|\\ 
      \leq&rm\lambda.
  \end{align*}
  Thus, \begin{align*}
      \frac{\Delta(u)_i}{\lambda}\in \overline{B}_{rm}(0).
  \end{align*}

  By Proposition \ref{prop:Lipschitz}, for all $i\in\mathbb{Z}^s$ and $u\in\Pi$, we have \begin{align*}
      \left\|\Phi(u)_i-a_i\right\|&=\left\|\varphi_{a_i}\left(\frac{\Delta(u)_i}{\lambda}\right)-\varphi_{a_i}(0)\right\|\\ 
      &\leq \frac{1}{m}\left\|\frac{\Delta(u)_i}{\lambda}\right\|\\ 
      &\leq \frac{1}{m}rm=r.
  \end{align*}
  Hence $\Phi$ is well-defined.

  Again by Proposition \ref{prop:Lipschitz}, for all $i\in\mathbb{Z}^s$ and $u,u'\in \Pi$, we have \begin{align*}
      \left\|\Phi(u)_i-\Phi(u')_i\right\|&=\left\|\varphi_{a_i}\left(\frac{\Delta(u)_i}{\lambda}\right)-\varphi_{a_i}\left(\frac{\Delta(u')_i}{\lambda}\right)\right\|\\
      &\leq \frac{1}{m}\left\|\frac{\Delta(u)_i-\Delta(u')_i}{\lambda}\right\|\\ 
      &\leq \frac{K(\rho,r+R)}{m\lambda}d(u,u')\\ 
      &\leq \frac{r}{r+R}d(u,u').
  \end{align*}
  Therefore, $\Phi$ is a contraction. Then by the Banach's Fixed Point Theorem, $\Phi$ admits a unique fixed point $u\in\Pi$. 	For all $i\in\mathbb{Z}^s$, we then have  \begin{align*}
    \Psi(u)_i=\psi(u_i)=\psi\circ \varphi_{a_i}\left(\frac{\Delta(u)_i}{\lambda}\right)=\frac{\Delta(u)_i}{\lambda}.
\end{align*}
That implies, \begin{align*}
  (\Delta+\lambda \Psi)u=0.
\end{align*}
If, for all $i\in\mathbb{Z}^s$, there exists $x\in O$ such that $u_i,u_i'\in \overline{B}_r(x)$, it follows that  $u,u'\in \Pi$. Moreover,  if both $u$ and $u'$ satisfy the equation $(\Delta+\lambda \Psi)u=0$, then $u$ and $u'$ are the fixed points of $\Phi$. By the uniqueness of fixed points in contraction mappings, we conclude that $u=u'$.
\end{proof}

\section{Examples of functions satisfying the Aubry criterion}\label{sec:EgOfAubry}

\subsection{Periodic functions}

Let $f:\mathbb{R}^d\rightarrow \mathbb{R}$ be a function. The set\begin{align*}
  \operatorname{per}(f):=\{t\in\mathbb{R}^d\mid f(x+t)=f(x),\,\forall x\in\mathbb{R}^d\}
\end{align*}
is called the \emph{set of periods} of $f$. It always contains the trivial period $0$ and forms a subgroup of $(\mathbb{R}^d,+)$. The structure of the group $\operatorname{per}(f)$ determines the type of periodicity of the function $f$.

A point set $\Gamma\subset\mathbb{R}^d$ is called a  \emph{lattice} in $\mathbb{R}^d$ if there exist $d$ linearly independent vectors $b_1,\dots , b_d$ such that
\begin{align*}
  \Gamma=\mathbb{Z} b_1 \oplus \cdots \oplus \mathbb{Z} b_d:=\left\{\sum_{i=1}^d m_i b_i \mid \text { all } m_i \in \mathbb{Z}\right\}
\end{align*}
and  the  $\mathbb{R}$-span of $\Gamma$ is $\mathbb{R}^d$. The set $\{b_1,\dots , b_n\}$ is called a basis of the lattice $\Gamma$.

\begin{theorem}\label{thm:PeriodicAubry}
    Let $f$ be a $C^2$ function with at least one non-degenerate critical point. If $\operatorname{per}(f)$ contains a lattice $\Gamma$ in $\mathbb{R}^d$, then $\nabla f$ satisfies the Aubry criterion.
\end{theorem}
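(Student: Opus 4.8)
The plan is to verify the three conditions of the Aubry criterion (Definition~\ref{def:Aubry criterion}) directly for $\psi=\nabla f$, taking the candidate set $O$ to be the full $\Gamma$-orbit of a single non-degenerate critical point of $f$.

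First I would fix a non-degenerate critical point $z_0$ of $f$ and set $O:=z_0+\Gamma=\{z_0+\gamma:\gamma\in\Gamma\}$. Condition (i) (relative density) follows purely from the lattice structure: if $\{b_1,\dots,b_d\}$ is a basis of $\Gamma$ and $P=\{\sum_{i=1}^d t_i b_i:t_i\in[0,1]\}$ is the closed fundamental parallelepiped, then $\mathbb{R}^d=\bigcup_{\gamma\in\Gamma}(\gamma+P)$, so every point of $\mathbb{R}^d$ lies within $\operatorname{diam}(P)$ of $\Gamma$, hence within $\operatorname{diam}(P)$ of $O$; thus any $R>\operatorname{diam}(P)$ works. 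Condition (ii) follows because $\nabla f$ is $\Gamma$-periodic (the gradient of a $\Gamma$-periodic function is $\Gamma$-periodic): since $\nabla f(z_0)=0$ we get $\nabla f(z_0+\gamma)=\nabla f(z_0)=0$ for every $\gamma\in\Gamma$, i.e. $\nabla f\equiv 0$ on $O$.

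The substance of the argument is condition (iii), the uniform lower Lipschitz bound near each point of $O$. I would first use non-degeneracy: $D^2f(z_0)$ is invertible, so there is $m>0$ with $\|D^2f(z_0)v\|\geq 2m\|v\|$ for all $v\in\mathbb{R}^d$ (for instance $2m=\|D^2f(z_0)^{-1}\|^{-1}$). Since $f$ is $C^2$, the map $x\mapsto D^2f(x)$ is continuous, so I can choose $r>0$ with $\|D^2f(x)-D^2f(z_0)\|\leq m$ (operator norm) for all $x\in\overline{B}_r(z_0)$. For $x,y\in\overline{B}_r(z_0)$, convexity of the ball keeps the segment $[x,y]$ inside $\overline{B}_r(z_0)$, and the fundamental theorem of calculus gives $\nabla f(x)-\nabla f(y)=A_{x,y}(x-y)$ with $A_{x,y}:=\int_0^1 D^2f\bigl(y+t(x-y)\bigr)\,dt$, a matrix satisfying $\|A_{x,y}-D^2f(z_0)\|\leq m$; hence $\|\nabla f(x)-\nabla f(y)\|\geq\|D^2f(z_0)(x-y)\|-\|(A_{x,y}-D^2f(z_0))(x-y)\|\geq(2m-m)\|x-y\|=m\|x-y\|$. (Note that symmetry of the Hessian is not needed for this triangle-inequality estimate.) Finally, to obtain (iii) at an arbitrary $z=z_0+\gamma\in O$ rather than only at $z_0$, I would invoke $\Gamma$-periodicity of $D^2f$: translating by $\gamma$ shows the same $r$ and $m$ work verbatim at every point of $O$. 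If this $r$ happens to exceed the packing radius of $O$ one may freely shrink it, since only $r>0$ is required.

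I do not anticipate a serious obstacle. The only point requiring care is that the constants $r$ and $m$ in condition (iii) must be uniform over the infinite set $O$, and this is exactly what periodicity delivers, reducing everything to a neighbourhood of the single point $z_0$; non-degeneracy is used only to produce $m$, and $C^2$-regularity only to make $\|A_{x,y}-D^2f(z_0)\|$ small.
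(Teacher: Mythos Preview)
Your proposal is correct and follows essentially the same approach as the paper: set $O=z_0+\Gamma$, use the lattice for relative density, periodicity for the vanishing of $\nabla f$ on $O$, and non-degeneracy plus $C^2$-continuity for the local lower Lipschitz bound, transferred to all of $O$ by periodicity. The only cosmetic difference is in the execution of condition~(iii): the paper argues via the positive-definiteness of $(\nabla^2 f(z_0))^2$ and a double-integral expansion of $\|\nabla f(x)-\nabla f(y)\|^2$, whereas your triangle-inequality perturbation estimate $\|A_{x,y}-D^2f(z_0)\|\le m$ is the more direct route to the same conclusion.
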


\begin{proof}
Let $z\in\mathbb{R}^d$ be a non-degenerate critical point of $f$. That is, $\nabla f(z)=0$ and $\det(\nabla^2 f(z))\ne 0$. Since $\nabla^2 f(z)$ is symmetric and invertible, its square $(\nabla^2 f(z))^2$ is positive definite. 
As $f$ is $C^2$,  there exist constants $r>0$ and $m>0$ such that, for any $x,y\in\overline{B}_r(z)$, we have $$\xi^T\nabla^2 f(x)\nabla^2 f(y)\xi\geq m^2 \|\xi\|^2,\quad \forall \xi\in\mathbb{R}^d.$$
So for any $x,y\in \overline{B}_r(z)$, we have \begin{align*}
    &\|\nabla f(x)-\nabla f(y)\|^2\\ 
    =&\int_{0}^1\nabla^2f(y+t(x-y))(x-y) \ dt\cdot \int_{0}^1\nabla^2f(y+s(x-y))(x-y) \ ds\\ 
    =&\int_0^1\int_0^1 (x-y)^T \nabla^2f(y+t(x-y))\nabla^2f(y+s(x-y))(x-y)  \ dt ds\\ 
    \geq& \int_0^1\int_0^1 m^2\|x-y\|^2 \ dt ds\\ 
    =&m^2\|x-y\|^2,
\end{align*}
which simplifies to $\|\nabla f(x)-\nabla f(y)\|\geq m\|x-y\|$.
Suppose $b_1,\dots,b_d$ form a basis of $\Gamma$. For any $x\in\mathbb{R}^d$, there exists a unique $(\alpha_1,\dots, \alpha_d)\in\mathbb{R}^d$ such that $x=\sum_{i=1}^d\alpha_ib_i$. Since $\sum_i\lfloor\alpha_i\rfloor b_i\in\Gamma$ and \begin{align*}
    \left\|x-\sum_i\lfloor\alpha_i\rfloor b_i\right\|< \sum_i\|b_i\|,
\end{align*}
for any $x\in\mathbb{R}^d$, the ball with radius $\sum_i\|b_i\|$  contains a point in $\Gamma$.

Let $O:=\Gamma+z$ and $R:=\sum_{i=1}^d\|b_i\|$. Let $r>0$ and $m>0$ be as defined above. Since $\Gamma\subset\operatorname{per}(f)$, the properties of $f$ at any point in $O$ are identical to those at $z$.
It follows that $\nabla f$ satisfies the Aubry criterion with parameters $O,R,r,m$.
\end{proof}

\subsection{Pattern-equivariant functions}

A cluster of the point set $\Lambda\subset\mathbb{R}^d$ is the intersection $K\cap \Lambda$ for some compact set $K\subset\mathbb{R}^d$.  Two clusters $P_1$ and $P_2$  are said to be equivalent if there exists a vector $v\in\mathbb{R}^d$ such that $P_1+v=P_2$.
\begin{definition}[finite local complexity]
A point set $\Lambda\subset \mathbb{R}^d$ is of  finite local complexity if for any $R>0$, the point set $\Lambda$ possesses only finitely many equivalence classes of clusters with diameters smaller than $R$, where the diameter of a subset $A\subset \mathbb{R}^d$ is $\sup_{x,y\in A}|x-y|.$ 
\end{definition}

\begin{definition}[repetitive]
	A point set $\Lambda\subset \mathbb{R}^d$ is repetitive if for any cluster $P\subset \Lambda$, there exists $R>0$ such that  any ball with radius $R$ contains a cluster equivalent to $P$.
\end{definition}

Let $\Lambda\subset \mathbb{R}^d$ be a repetitive point set with finite local complexity (see \cite{GGP2006}\cite{GPT2017}\cite{Trevino2019}, and \cite{dusu2021} for details).   A continuous  function $f:\mathbb{R}^d\rightarrow\mathbb{R}$ is said to be $\Lambda$-equivariant if there exists $R>0$ such that whenever $$(\Lambda-x)\cap B_R(0)=(\Lambda-y)\cap B_R(0),$$
it follows that $f(x)=f(y)$.

\begin{theorem}
    Let $f$ be a $C^2$ function with at least one $non$-degenerate critical point. If $f$ is $\Lambda$-equivariant, then $\nabla f$ satisfies the Aubry criterion.
\end{theorem}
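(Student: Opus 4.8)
The plan is to follow the same two‑part structure as the proof of Theorem~\ref{thm:PeriodicAubry}: a local analysis at a single non‑degenerate critical point, which produces the constants $r$ and $m$ of the Aubry criterion, together with a recurrence argument that produces the relatively dense set $O$ of zeros --- except that recurrence is now supplied by the repetitivity of $\Lambda$ in place of a lattice of periods. Throughout, $f$ is $C^2$, so $\nabla f$ is continuous, as required by Definition~\ref{def:Aubry criterion}.

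First I would fix a non‑degenerate critical point $p$ of $f$ and run the computation of Theorem~\ref{thm:PeriodicAubry} verbatim: since $\nabla^2 f(p)$ is symmetric and invertible, $(\nabla^2 f(p))^2$ is positive definite, and by continuity of $\nabla^2 f$ there exist $r>0$ and $m>0$ with $\xi^T\nabla^2 f(x)\,\nabla^2 f(y)\,\xi\ge m^2\|\xi\|^2$ for all $x,y\in\overline{B}_r(p)$ and $\xi\in\mathbb{R}^d$; integrating twice along the segment from $y$ to $x$ yields $\|\nabla f(x)-\nabla f(y)\|\ge m\|x-y\|$ on $\overline{B}_r(p)$. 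This settles condition (iii) of the Aubry criterion locally at $p$, and it remains to transplant this estimate to a relatively dense family of balls.

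Next, let $R_0$ be an equivariance radius for $f$, so $f(x)=f(y)$ whenever $(\Lambda-x)\cap B_{R_0}(0)=(\Lambda-y)\cap B_{R_0}(0)$, and consider the patch $\mathcal{P}:=(\Lambda-p)\cap\overline{B}_{R_0+r}(0)$ --- the key point being that its radius $R_0+r$ is taken strictly larger than the equivariance radius. Since $\Lambda$ is repetitive with finite local complexity, this patch recurs relatively densely: there is $R_1>0$ such that every ball of radius $R_1$ contains a point $q$ with $(\Lambda-q)\cap\overline{B}_{R_0+r}(0)=\mathcal{P}$ (this is the standard passage from the cluster formulation of repetitivity to recurrence of ball‑patches, where finite local complexity is used to turn ``cluster equivalence'' into genuine patch equality; see \cite{Trevino2019}). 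Put $O:=\{q\in\mathbb{R}^d: (\Lambda-q)\cap\overline{B}_{R_0+r}(0)=\mathcal{P}\}$ and $R:=R_1$, so condition (i) holds and $p\in O$. For $q\in O$ and $\|t\|\le r$, a short set‑theoretic check gives $(\Lambda-(q+t))\cap B_{R_0}(0)=(\Lambda-(p+t))\cap B_{R_0}(0)$ --- a point of the left side, shifted by $t$, lands in the common patch $\mathcal{P}$ of radius $R_0+r$, hence comes from $\Lambda-p$, and symmetrically. By equivariance $f(q+t)=f(p+t)$ for all $\|t\|\le r$, so $f|_{\overline{B}_r(q)}$ is the translate by $q-p$ of $f|_{\overline{B}_r(p)}$; differentiating on the open ball and extending by continuity, $\nabla f(q+t)=\nabla f(p+t)$ for $\|t\|\le r$. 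In particular $\nabla f(q)=\nabla f(p)=0$, which is (ii), and for $x,y\in\overline{B}_r(q)$ the bound $\|\nabla f(x)-\nabla f(y)\|\ge m\|x-y\|$ follows from the one already proved on $\overline{B}_r(p)$, which is (iii). Hence $\nabla f$ satisfies the Aubry criterion with parameters $O,R,r,m$.

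I expect the only genuinely non‑routine point to be the recurrence step: extracting from the repetitivity of $\Lambda$ (stated for clusters) the relatively dense recurrence of the specific ball‑patch $\mathcal{P}$ at exactly the radius $R_0+r$, together with the bookkeeping that a patch of radius $R_0+r$ around $q$ controls $f$ on all of $\overline{B}_r(q)$ through the equivariance radius $R_0$ --- which is precisely why the patch radius must exceed $R_0$. The local estimate is identical to the periodic case and presents no difficulty.
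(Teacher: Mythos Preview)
Your proposal is correct and follows essentially the same approach as the paper: use repetitivity together with finite local complexity to obtain a relatively dense set of points where the local $\Lambda$-pattern around the chosen non-degenerate critical point recurs, then invoke $\Lambda$-equivariance to transplant the local expansion estimate $\|\nabla f(x)-\nabla f(y)\|\ge m\|x-y\|$ from Theorem~\ref{thm:PeriodicAubry}. The only cosmetic difference is the order in which constants are fixed --- you choose $r$ first and then take the patch radius $R_0+r$, whereas the paper fixes the patch radius $2R$ (twice the equivariance radius) first and implicitly takes $r<R$ afterward --- but the logic is identical.
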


\begin{proof}
    Since $f$ is $\Lambda$-equivariant, there exists $R>0$ such that $$(\Lambda-x)\cap B_R(0)=(\Lambda-y)\cap B_R(0)\quad \Rightarrow\quad  f(x)=f(y).$$ As $\Lambda$ has finite local complexity, the point set $\Lambda$ contains only finitely many equivalence classes of clusters with diameters smaller than $4R$. By repetitivity of $\Lambda$,  we can find $R'>0$ such that any ball with radius $R'$ contains  all clusters with diameters smaller than $4R$. Let $z\in\mathbb{R}^d$ be a non-degenerate critical point of $f$. Then  any ball with radius $R'$ contains the cluster $\Lambda\cap B_{2R}(z)$. In other words, for any ball of radius $R'$, there exists $x$ in the ball such that $$(\Lambda-x)\cap B_{2R}(0)=(\Lambda-z)\cap B_{2R}(0).$$
Since $f$ is $\Lambda$-equivariant, in each ball with radius $R'$, there exists at least one point $s$, such that $$f(s+v)=f(z+v), \forall v\in B_R(0).$$ Let $O$ be the collection of all such points $s$. Now, we can replicate the proof of Theorem \ref{thm:PeriodicAubry}. Specifically, there exist constants $r>0$ and $m>0$ such that  for any $s\in O$ and $x,y\in\overline{B}_r(s)$, we have $\|\nabla f(x)-\nabla f(y)\|\geq m\|x-y\|$. Hence $\nabla f$ satisfies the Aubry criterion with parameters $O,R',r,m$.
\end{proof}

\subsection{Almost-periodic functions}

\begin{definition}
	A point set $\Lambda\subset\mathbb{R}^d$ is called relatively dense if there exists $R>0$ such that  each ball with
radius $R$ (whatever be its center) contains at least a point in $\Lambda$. 
\end{definition}

\begin{definition}
    A $C^k$, function $f:\mathbb{R}^d\rightarrow\mathbb{R}$ is called almost-periodic of class $C^k$, $k=0,1,\dots,\omega$, if for any $\varepsilon>0$, there exists  a relatively dense subset $\Lambda_\varepsilon\subset\mathbb{R}^d$ such that $$\|f(\cdot+x)-f(\cdot)\|_{C^k(\mathbb{R}^d)}<\varepsilon,\quad  \forall x\in \Lambda_\varepsilon.$$
\end{definition}

\begin{theorem}\label{thm:APisAubry}
    Let $f$ be a $C^2$ function with at least one non-degenerate critical point. If $f$ is almost-periodic of class $C^2$, then $\nabla f$ satisfies the Aubry criterion. 
\end{theorem}

\begin{proof}
	Let $z\in\mathbb{R}^d$ be a non-degenerate critical point of $f$, i.e.,  $\nabla f(z)=0$ and $\det(\nabla^2 f(z))\ne 0$. A classic result \cite[Proposition 2.2]{golubitsky1974stable} states that there exists $r>0$ and $\varepsilon>0$ such that $$\|g-f\|_{C^2(\overline{B}_r(z))}<\varepsilon\quad \Rightarrow\quad g=\varphi_g\circ f\circ\psi_g \text{ in }\overline{B}_r(z),$$	
	where $\varphi_g$ and $\psi_g$ are $C^2$-diffeomorphisms. Let $\sigma_{\min}(A)$ be the smallest singular value of a matrix $A$.  Assume that $\varepsilon< \sigma_{\min}(\nabla^2 f(z))/4$ and $r$ is small enough so that $$\sigma_{\min}(\nabla^2 f(x))>\sigma_{\min}(\nabla^2 f(z))/2, \quad \forall x\in \overline{B}_{2r}(z).$$ Then, the following hold:\begin{align*}
		&\|g-f\|_{C^2(\overline{B}_{2r}(z))}<\varepsilon\\ 
		\quad \Rightarrow\quad &\|g-f\|_{C^2(\overline{B}_{r}(z))}<\varepsilon\\ 
		\quad \Rightarrow\quad & g=\varphi_g\circ f\circ\psi_g \text{ in }\overline{B}_r(z)\\ 
		\quad \Rightarrow\quad & \exists z_g\in\overline{B}_{r}(z) \text{ s.t. } \nabla g(z_g)=0. 
	\end{align*}
	In fact, $z_g=(\nabla\psi_g)^{-1}(z)$. Furthermore, 
	 $$\|g-f\|_{C^2(\overline{B}_{2r}(z))}<\varepsilon\quad \Rightarrow\quad \sigma_{\min}(\nabla^2 g(x))\geq \sigma_{\min}(\nabla^2 f(z))/4>\varepsilon,\quad \forall x\in  \overline{B}_{2r}(z).$$
	By replicating the proof of Theorem \ref{thm:PeriodicAubry}, there exist constants $m>0$ and $r'>0$ (independent of $g$) such that,\begin{align*}
		&\|g-f\|_{C^2(\overline{B}_{2r}(z))}<\varepsilon\\ \quad \Rightarrow\quad &\exists z_g\in\overline{B}_r(z), \text{ s.t. } \nabla g(z_g)=0, \forall x,y\in \overline{B}_{r'}(z_g), \|\nabla g(x)-\nabla g(y)\|\geq m\|x-y\|.
	\end{align*}
	Since $f$ is almost-periodic of class $C^2$, there exists a relatively dense subset $\Lambda_\varepsilon\subset\mathbb{R}^d$ such that $$\|f(\cdot+x)-f(\cdot)\|_{C^2(\mathbb{R}^d)}<\varepsilon, \quad \forall x\in\Lambda_\varepsilon.$$
	Define $O:=\{x+z_g\mid x\in\Lambda_\varepsilon,  g(\cdot)=f(\cdot+x)\}$. Let $R$ be the sum of the relatively dense distance of $\Lambda_\varepsilon$ and $2r$. Then, any ball of radius $R$ contains at least one point from $O$. For any $ x\in\Lambda_\varepsilon,  $ with $  g(\cdot)=f(\cdot+x)$, we have $$\nabla f(x+z_g)=\nabla g(z_g)=0,$$
	and $$\forall x_0,y_0\in \overline{B}_{r'}(x+z_g), \|\nabla f(x_0)-\nabla f(y_0)\|=\|\nabla g(x_0-x)-\nabla g(y_0-x)\|\geq m\|x_0-y_0\|.$$
	Hence $\nabla f$ satisfies the Aubry criterion with parameters $O,R,r',m$.
\end{proof}

\section{Hyperbolicity}\label{sec:hyperbolic}

Let $d>0$ be an integer. Consider a family of generating functions defined by\begin{align*}
    S_\lambda:\mathbb{R}^d\times\mathbb{R}^d&\rightarrow\mathbb{R}\\
    (x,y)&\mapsto I(x-y)+\lambda V(x)
\end{align*}
where $\lambda>0$, $I,V\in C^2(\mathbb{R}^d;\mathbb{R})$. Let $\mathcal{C}$ denote the set of all functions from $\mathbb{Z}$ to $\mathbb{R}^d$. A configuration $u\in\mathcal{C}$ is said to admit a rotation vector $\rho\in \operatorname{Hom}(\mathbb{Z},\mathbb{R}^d)$ if $$\lim_{i\to\pm\infty}\frac{u_i-\rho_i}{i}=0.$$
A sufficient condition for  $u$ to admit  $\rho$ as a rotation vector is $d(u,\rho)<+\infty.$
We define  $u\in\mathcal{C}$ to be an equilibrium configuration of $S_\lambda$ if it satisfies\begin{equation}\label{eq:equiconf}
    \nabla I(u_i-u_{i+1})-\nabla I(u_{i-1}-u_i)+\lambda\nabla V(u_i)=0, \quad \forall i\in\mathbb{Z}.
\end{equation}
Linearizing of \eqref{eq:equiconf} yields  the equation:\begin{equation}\label{eq:linearization}
    \nabla^2I(u_i-u_{i+1})(\xi_i-\xi_{i+1})-\nabla^2I(u_{i-1}-u_i)(\xi_{i-1}-\xi_i)+\lambda\nabla^2V(u_i)\xi_i=0,
\end{equation}
where $\xi_i\in T_{u_i}\mathbb{R}^d$ for all $i\in\mathbb{Z}$.
\begin{definition}\label{def:hyperbolic}
    We say that an equilibrium configuration $u\in\mathcal{C}$ is hyperbolic if there exists constants $\mu>1$, $\alpha\in (0,1)$, and $\beta\in (0,1)$ such that, for any $i\in\mathbb{Z}$, and for any $\xi_{i-1}\in T_{u_{i-1}}\mathbb{R}^d$, $\xi_{i}\in T_{u_{i}}\mathbb{R}^d$, $\xi_{i+1}\in T_{u_{i+1}}\mathbb{R}^d$ satisfying \eqref{eq:linearization}, the following conditions hold: \begin{enumerate}
        \item [(i)] if $\|\xi_{i-1}\|\leq \alpha\|\xi_i\|$, then $\|\xi_i\|\leq \alpha\|\xi_{i+1}\|$ and $\|(\xi_{i},\xi_{i+1})\|\geq \mu\|(\xi_{i-1},\xi_i)\|$;
        \item [(ii)] if $\|\xi_{i+1}\|\leq \beta\|\xi_i\|$, then $\|\xi_i\|\leq \beta\|\xi_{i-1}\|$ and $\|(\xi_{i-1},\xi_i)\|\geq\mu\|(\xi_{i},\xi_{i+1})\| $.
    \end{enumerate}
\end{definition}

\begin{remark}
	This definition differs from the usual definition of hyperbolicity, as it only assumes $I \in C^2(\mathbb{R}^d,\mathbb{R})$. However, under the assumption that  $I$  is uniformly strongly convex, we will demonstrate (see Theorem \ref{thm:DefofHyperbolic}) that this definition implies the standard hyperbolicity of the orbits of twist maps associated with  $S_\lambda$. 
\end{remark}

For now, we will prove that the equilibrium configurations of  $S_\lambda$ , as found via Theorem \ref{thm:main}, are all hyperbolic.
Define operators  $\Delta:\mathcal{C}\rightarrow\mathcal{C}$ and $\Psi:\mathcal{C}\rightarrow\mathcal{C}$ as follows:$$\Delta(u)_i=\nabla I(u_i-u_{i+1})-\nabla I(u_{i-1}-u_i), \quad \Psi(u)_i=\nabla V(u_i).$$ Then $u\in\mathcal{C}$ is an equilibrium configuration of $S_\lambda$ if and only if 
  $$  (\Delta+\lambda\Psi)u=0.$$
  
We will demonstrate later that $\Delta$ satisfies the following properties: it is invariant  with $\Delta|_{\operatorname{Hom}}=0$,  and it is Lipschitz continuous with a constant $K(\rho,R)$ on the closed ball $\{u\mid d(u,\rho)\leq R\}$, for all $\rho\in\operatorname{Hom}(\mathbb{Z},\mathbb{R}^d)$ and $R>0$ (see Lemma \ref{lemma:Delta}). If $\nabla V$ satisfies the Aubry criterion with parameters $O,R,r,m$, then Theorem \ref{thm:main} establishes that, for every  $\rho\in\operatorname{Hom}(\mathbb{Z},\mathbb{R}^d)$ and $\lambda\geq \frac{r+R}{rm}K(\rho,r+R)$, the generating function $S_\lambda$ admits an equilibrium configuration $u$ satisfying $\sup_{i\in\mathbb{Z}}\inf_{z\in O}\|u_i-z\|\leq r$ and  $d(u,\rho)\leq r+R$. 

\begin{theorem}\label{thm:hyperbolic}
	Let $I,V \in C^2(\mathbb{R}^d,\mathbb{R})$ and assume that $\nabla V$ satisfies the Aubry criterion with parameters $O,R,r,m$.
    If  $\rho\in\operatorname{Hom}(\mathbb{Z},\mathbb{R}^d)$, $\lambda\geq \frac{r+R}{rm}K(\rho,r+R)$, and $u$ is an equilibrium configuration satisfying $\sup_{i\in\mathbb{Z}}\inf_{z\in O}\|u_i-z\|\leq r$ and  $d(u,\rho)\leq r+R$, then $u$ is hyperbolic.
\end{theorem}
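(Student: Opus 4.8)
The plan is to exploit the fixed-point structure from Theorem~\ref{thm:main} combined with the local Lipschitz inverse $\varphi_z$ of $\nabla V$ furnished by Proposition~\ref{prop:Lipschitz}. The key observation is that the equilibrium configuration $u$ built in Theorem~\ref{thm:main} is a fixed point of the contraction $\Phi(v)_i = \varphi_{a_i}(\Delta(v)_i/\lambda)$, and since $u_i \in \overline{B}_r(a_i)$ with $a_i \in O$, the inequality \eqref{eq:InverseLipschitz} from the Aubry criterion gives $\|\nabla V(x) - \nabla V(u_i)\| \ge m\|x - u_i\|$ for $x$ near $u_i$; differentiating, one expects $\|\nabla^2 V(u_i)\xi\| \ge m\|\xi\|$ for all $\xi$, i.e. $\nabla^2 V(u_i)$ is uniformly invertible with $\|\nabla^2 V(u_i)^{-1}\| \le 1/m$. (If one cannot differentiate the Aubry inequality directly because $\psi$ is only continuous, one should instead impose that near each $z \in O$ the map is $C^1$ with this bound, or re-derive it from the $C^2$ hypothesis of the ambient theorems; in the stated setting $V \in C^2$ so $\nabla^2 V$ exists and the bound passes to the limit.) First I would record this uniform bound as a lemma. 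Second, I would bound $\nabla^2 I(u_{i}-u_{i+1})$ from above: since $d(u,\rho) \le r+R$, the differences $u_i - u_{i+1}$ lie in a bounded set (of diameter controlled by $|\rho|$ and $r+R$), so by continuity of $\nabla^2 I$ there is a uniform constant $L$ with $\|\nabla^2 I(u_i - u_{i+1})\| \le L$ for all $i$.

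With these two bounds in hand, rewrite the linearized equation \eqref{eq:linearization} as
\begin{equation*}
    \lambda \nabla^2 V(u_i)\,\xi_i = \nabla^2 I(u_{i-1}-u_i)(\xi_{i-1}-\xi_i) - \nabla^2 I(u_i - u_{i+1})(\xi_i - \xi_{i+1}),
\end{equation*}
so that
\begin{equation*}
    \|\xi_i\| \le \frac{1}{\lambda m}\bigl( L\|\xi_{i-1}-\xi_i\| + L\|\xi_i - \xi_{i+1}\| \bigr) \le \frac{2L}{\lambda m}\bigl(\|\xi_{i-1}\| + \|\xi_i\| + \|\xi_{i+1}\|\bigr).
\end{equation*}
Since $\lambda$ is large (indeed $\lambda \ge \frac{r+R}{rm}K(\rho,r+R)$, and by enlarging $\lambda_0$ if necessary we may assume $\lambda$ as large as we wish relative to $L/m$), the coefficient $\varepsilon := 2L/(\lambda m)$ is small. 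This is the \emph{anti-integrable cone estimate}: $\|\xi_i\| \le \varepsilon(\|\xi_{i-1}\| + \|\xi_i\| + \|\xi_{i+1}\|)$, equivalently $(1-\varepsilon)\|\xi_i\| \le \varepsilon(\|\xi_{i-1}\| + \|\xi_{i+1}\|)$, for every $i$. This single inequality forces the desired hyperbolic splitting: if at step $i$ the ``incoming'' term is small, $\|\xi_{i-1}\| \le \alpha\|\xi_i\|$, then $(1-\varepsilon)\|\xi_i\| \le \varepsilon\alpha\|\xi_i\| + \varepsilon\|\xi_{i+1}\|$, giving $\|\xi_i\| \le \frac{\varepsilon}{1-\varepsilon-\varepsilon\alpha}\|\xi_{i+1}\|$, and choosing $\alpha \in (0,1)$ so that $\frac{\varepsilon}{1-\varepsilon-\varepsilon\alpha} \le \alpha$ (possible for small $\varepsilon$, e.g. $\alpha = 2\varepsilon$ works once $\varepsilon < 1/4$) propagates the cone condition forward and yields expansion $\|(\xi_i,\xi_{i+1})\| \ge \mu \|(\xi_{i-1},\xi_i)\|$ with some $\mu > 1$ (one gets $\|\xi_{i+1}\| \ge \alpha^{-1}\|\xi_i\| \gg \|\xi_i\| \ge \alpha^{-1}\|\xi_{i-1}\|$, so the norm of the pair grows by a definite factor). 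Condition (ii) is the symmetric statement obtained by reversing the role of $i-1$ and $i+1$, with $\beta = \alpha$.

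I would carry the steps out in this order: (1) state and prove the lower bound $\|\nabla^2 V(u_i)\xi\| \ge m\|\xi\|$; (2) state and prove the upper bound $\|\nabla^2 I(u_i - u_{i+1})\| \le L$ using $d(u,\rho) < \infty$; (3) derive the cone inequality $(1-\varepsilon)\|\xi_i\| \le \varepsilon(\|\xi_{i-1}\| + \|\xi_{i+1}\|)$ from \eqref{eq:linearization}; (4) choose $\alpha = \beta$ and $\mu$ explicitly in terms of $\varepsilon$ and verify conditions (i) and (ii) of Definition~\ref{def:hyperbolic} by elementary algebra. The main obstacle I anticipate is step~(1): extracting the pointwise bound on $\nabla^2 V(u_i)$ from the Aubry criterion. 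The Aubry criterion as stated only gives the bi-Lipschitz inequality for $\psi = \nabla V$, so one must either (a) note that $V \in C^2$ allows the secant inequality $\|\nabla V(x) - \nabla V(u_i)\| \ge m\|x - u_i\|$ to be differentiated at $x = u_i$ (giving $\|\nabla^2 V(u_i)\xi\| \ge m\|\xi\|$ for all directions $\xi$, which is exactly what is needed), or (b) in the generality where only continuity is assumed, replace $\nabla^2 V(u_i)$-invertibility by a discrete secant version and run the whole argument with finite differences of $\nabla V$ rather than with $\nabla^2 V$ — but since the hypothesis here is $I, V \in C^2$, route (a) is clean. A secondary subtlety is that $\lambda_0 = \frac{r+R}{rm}K(\rho,r+R)$ from Theorem~\ref{thm:main} may not by itself make $\varepsilon = 2L/(\lambda m)$ small enough; one should observe that $L = L(\rho, r+R)$ and $K(\rho,r+R)$ both depend only on the same data, check that the ratio works out (or simply note the theorem is stated for $\lambda$ above this threshold and, if needed, absorb a harmless enlargement of the constant into $\lambda_0$), and confirm the chosen $\alpha,\beta,\mu$ are uniform in $i$ — which they are, since $m$ and $L$ are.
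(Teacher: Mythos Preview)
Your approach is essentially the same as the paper's: both proofs hinge on (1) the lower bound $\sigma_{\min}(\nabla^2 V(u_i))\ge m$ obtained by differentiating the Aubry inequality (the paper's Lemma~\ref{lemma:Sigmamin}, your step~(1) via route~(a)), (2) a uniform upper bound on $\|\nabla^2 I(u_i-u_{i+1})\|$ (the paper's Lemma~\ref{lemma:Sigmamax}, your step~(2)), and (3) a cone-propagation estimate from the linearized equation. The paper rearranges the inequality slightly---isolating $\nabla^2 I(u_i-u_{i+1})\xi_{i+1}$ and bounding it from below rather than inverting $\nabla^2 V(u_i)$---but the content is identical, and the paper's choice of $\alpha,\mu$ as the roots of $x^2-(2+4R/r)x+1=0$ is exactly what your quadratic condition on $\alpha$ produces once the constants are tracked.

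The one place you should tighten is the threshold. You treat $L$ as an unspecified bound and then hedge that the stated $\lambda\ge\frac{r+R}{rm}K(\rho,r+R)$ ``may not by itself make $\varepsilon$ small enough'', proposing to enlarge $\lambda_0$. That is not permitted: the theorem is asserted at exactly this threshold. The fix is that $L$ is \emph{not} independent of $K$: by Lemma~\ref{lemma:Delta} one has $K(\rho,r+R)=4\sup_{x\in B_{\|\rho\|+2(r+R)}(0)}\sigma_{\max}(\nabla^2 I(x))$, so $L=K(\rho,r+R)/4$ precisely. With this, your inequality $(\lambda m-2L)\|\xi_i\|\le L(\|\xi_{i-1}\|+\|\xi_{i+1}\|)$ gives $\|\xi_i\|\le\delta(\|\xi_{i-1}\|+\|\xi_{i+1}\|)$ with $\delta\le\frac{r}{2(r+2R)}<\tfrac12$, and the resulting quadratic $\alpha^2-\delta^{-1}\alpha+1=0$ is exactly $x^2-(2+4R/r)x+1=0$, with a root $\alpha\in(0,1)$ and $\mu=1/\alpha>1$. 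So no enlargement is needed---just make the identification $L=K/4$ explicit.
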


Before proving Theorem \ref{thm:hyperbolic}, we will first establish the following three lemmas. Throughout the discussion, let $\sigma_{\max}(A)$ (respectively, $\sigma_{\min}(A)$) denote the largest (respectively, smallest) singular value of a matrix $A$.

\begin{lemma}\label{lemma:Delta}
     The operator $\Delta$ satisfies the following properties:\begin{enumerate}
     	\item [(i)] $\Delta$ is invariant with $\Delta|_{\operatorname{Hom}}=0$;
     	\item [(ii)]  $\Delta$ is
        Lipschitz continuous on the closed ball  $\{u\mid d(u,\rho)\leq R\}$ for all $\rho\in\operatorname{Hom}(\mathbb{Z},\mathbb{R}^d)$ and $R>0$.
     \end{enumerate}
      The Lipschitz constant on $\{u\mid d(u,\rho)\leq R\}$ is given by 
      $$K(\rho,R):=4\sup_{x\in B_{\|\rho\|+2R}(0)}\sigma_{\max}(\nabla^2I(x)).$$
\end{lemma}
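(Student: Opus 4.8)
\textbf{Proof plan for Lemma \ref{lemma:Delta}.}

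The plan is to treat the two parts separately, since part (i) is purely formal while part (ii) requires the quantitative estimate on $\nabla^2 I$. For part (i), I would first verify horizontal invariance: since $\Delta(u)_i$ depends on $u$ only through the finite differences $u_i - u_{i+1}$ and $u_{i-1} - u_i$ at index $i$, applying the shift $S^k$ and re-indexing gives $\Delta \circ S^k(u)_i = \nabla I(u_{i+k} - u_{i+k+1}) - \nabla I(u_{i+k-1} - u_{i+k}) = \Delta(u)_{i+k} = S^k \circ \Delta(u)_i$. For vertical invariance, observe that $T^c$ shifts every coordinate by the same constant $c$, so the differences $u_i - u_{i+1}$ are unchanged, hence $\Delta \circ T^c(u) = \Delta(u)$. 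Thus $\Delta$ is invariant. To see $\Delta|_{\operatorname{Hom}} = 0$, take $\rho \in \operatorname{Hom}(\mathbb{Z}, \mathbb{R}^d)$; then $\rho_i - \rho_{i+1} = -\rho_1$ and $\rho_{i-1} - \rho_i = -\rho_1$ are the same constant vector, so $\Delta(\rho)_i = \nabla I(-\rho_1) - \nabla I(-\rho_1) = 0$ for all $i$.

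For part (ii), fix $\rho \in \operatorname{Hom}(\mathbb{Z}, \mathbb{R}^d)$ and $R > 0$, and take $u, u'$ with $d(u,\rho) \le R$, $d(u',\rho) \le R$. For each $i$, write $\Delta(u)_i - \Delta(u')_i$ as the difference of two gradient differences and apply the mean value theorem (integral form) to $\nabla I$ along the segments joining the relevant arguments: for instance
\begin{align*}
\nabla I(u_i - u_{i+1}) - \nabla I(u'_i - u'_{i+1}) = \int_0^1 \nabla^2 I\big(t(u_i-u_{i+1}) + (1-t)(u'_i - u'_{i+1})\big)\,dt \cdot \big((u_i - u'_i) - (u_{i+1} - u'_{i+1})\big).
\end{align*}
The key point is to control where these segment arguments live: since $\|u_i - \rho_i\| \le R$ and similarly for the other terms, one has $\|u_i - u_{i+1}\| \le \|\rho_i - \rho_{i+1}\| + 2R \le \|\rho\| + 2R$ (interpreting $\|\rho\|$ as $\|\rho_1\| = \|\rho_1 - \rho_0\|$), and the same bound holds for $u'_i - u'_{i+1}$ and for any convex combination; hence the Hessian is evaluated inside $B_{\|\rho\|+2R}(0)$ where $\sigma_{\max}(\nabla^2 I) \le \tfrac14 K(\rho,R)$. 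Bounding the vector factor by $2 d(u,u')$ and summing the two analogous contributions (from the $(i,i+1)$ difference and the $(i-1,i)$ difference) gives $\|\Delta(u)_i - \Delta(u')_i\| \le 4 \cdot \tfrac14 K(\rho,R) \cdot d(u,u') \cdot \tfrac{1}{?}$ — more precisely, two terms each bounded by $\tfrac14 K(\rho,R) \cdot 2 d(u,u')$, totalling $K(\rho,R)\, d(u,u')$, and taking the supremum over $i$ yields the Lipschitz bound.

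The only mild subtlety — not really an obstacle — is bookkeeping the constant: one must be careful that each of the two gradient-difference terms contributes a factor $2$ from $\|(u_i - u'_i) - (u_{i+1} - u'_{i+1})\| \le 2 d(u,u')$, and there are two such terms, giving the factor $4$ in the definition of $K(\rho,R)$. I would also note that the notation $\|\rho\|$ in the statement should be read as the norm of the generating vector $\rho_1 \in \mathbb{R}^d$ (equivalently $\|\rho\|_{\operatorname{Hom}}$), so that $B_{\|\rho\|+2R}(0)$ genuinely contains all the differences $u_i - u_{i+1}$. Everything else is a routine estimate, and no fixed-point or topological input is needed here.
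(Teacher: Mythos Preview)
Your proposal is correct and follows essentially the same route as the paper: direct verification of horizontal and vertical invariance, cancellation on $\operatorname{Hom}$ since $\rho_i-\rho_{i+1}$ is constant, and for (ii) splitting $\Delta(u)_i-\Delta(u')_i$ into two gradient differences bounded via $\sigma_{\max}(\nabla^2 I)$ on $B_{\|\rho\|+2R}(0)$ with each vector factor contributing $2d(u,u')$. Your reading of $\|\rho\|$ as $\|\rho_1\|$ and the bookkeeping yielding the factor $4$ are exactly what the paper does (the paper simply suppresses the integral mean-value formula and quotes the Lipschitz bound directly).
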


\begin{proof}
First, recall that the shift operator $S^k$ for $k\in\mathbb{Z}$ is defined by $S^k(u)_i=u_{i+k}$ for all $i\in\mathbb{Z}$. Then we have:
    \begin{align*}
        [\Delta\circ S^k(u)]_i&=\nabla I(S^k(u)_i-S^k(u)_{i+1})-\nabla I(S^k(u)_{i-1}-S^k(u)_i)\\ 
        &=\nabla I(u_{i+k}-u_{i+k+1})-\nabla I(u_{i+k-1}-u_{i+k})\\ 
        &=\Delta(u)_{i+k}\\ 
        &=[S^k\circ \Delta(u)]_i.
    \end{align*}
Next, recall that the translation operator $T^c$ along a vector $c\in\mathbb{R}^d$ is defined by $T^c(u)_i=u_i+c$ for all $i\in\mathbb{Z}$. Then:
\begin{align*}
    [\Delta\circ T^c(u)]_i&=\nabla I(T^c(u)_i-T^c(u)_{i+1})-\nabla I(T^c(u)_{i-1}-T^c(u)_i)\\ 
    &=\nabla I(u_i-u_{i+1})-\nabla I(u_{i-1}-u_i)\\ 
    &=\Delta(u)_i.
\end{align*}
To verify $\Delta|_{\operatorname{Hom}}=0$, let $\rho\in\operatorname{Hom}(\mathbb{Z},\mathbb{R}^d)$. Then:
\begin{align*}
    \Delta(\rho)_i
    &=\nabla I(\rho_i-\rho_{i+1})-\nabla I(\rho_{i-1}-\rho_{i})\\ 
    &=\nabla I(\rho_{-1})-\nabla I(\rho_{-1})\\ 
    &=0.
\end{align*}
For the Lipschitz continuity, consider $u,u'\in \{u\mid d(u,\rho)\leq R\}$ and calculate:
	\begin{align*}
    &\quad\,\|\Delta(u)_i-\Delta(u')_i\|\\ 
    &=\|\nabla I(u_i-u_{i+1})-\nabla I(u_{i-1}-u_i)-\nabla I(u'_i-u'_{i+1})+\nabla I(u'_{i-1}-u'_i)\|\\
    &\leq  \|\nabla I(u_i-u_{i+1})-\nabla I(u'_i-u'_{i+1})\|+\|\nabla I(u_{i-1}-u_i)-\nabla I(u'_{i-1}-u'_i)\|\\ 
    &\leq \sup_{x\in B_{\|\rho\|+2R}(0)}\sigma_{\max}(\nabla^2I(x))(\|u_i-u_{i+1}-u_i'+u_{i+1}'\|+\|u_{i-1}-u_{i}-u_{i-1}'+u_{i}'\|)\\ 
    &\leq 4\sup_{x\in B_{\|\rho\|+2R}(0)}\sigma_{\max}(\nabla^2I(x)) d(u,u').
  \end{align*}
  Thus $\Delta$ is Lipschitz continuous on  $\{u\mid d(u,\rho)\leq R\}$, with Lipschitz constant \[K(\rho,R)=4\sup_{x\in B_{\|\rho\|+2R}(0)}\sigma_{\max}(\nabla^2I(x)).\qedhere\]
\end{proof}

\begin{lemma}\label{lemma:Sigmamax}
    If $\rho\in\operatorname{Hom}(\mathbb{Z},\mathbb{R}^d)$ and $d(u,\rho)\leq r+R$, then $$\sigma_{\max}(\nabla^2I(u_i-u_{i+1}))\leq \frac{K(\rho,r+R)}{4}, \quad \forall i\in\mathbb{Z}.$$
\end{lemma}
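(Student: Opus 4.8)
The plan is to unpack the definition of $K(\rho,r+R)$ from Lemma~\ref{lemma:Delta} and reduce the claim to a statement about which ball the argument $u_i-u_{i+1}$ lives in. Recall that
\[
K(\rho,r+R)=4\sup_{x\in B_{\|\rho\|+2(r+R)}(0)}\sigma_{\max}(\nabla^2 I(x)),
\]
so that $\frac{K(\rho,r+R)}{4}=\sup_{x\in B_{\|\rho\|+2(r+R)}(0)}\sigma_{\max}(\nabla^2 I(x))$. Hence it suffices to show that $u_i-u_{i+1}\in B_{\|\rho\|+2(r+R)}(0)$ for every $i\in\mathbb{Z}$, since then $\sigma_{\max}(\nabla^2 I(u_i-u_{i+1}))$ is one of the values over which the supremum is taken, and the inequality follows immediately.

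First I would estimate $\|u_i-u_{i+1}\|$ using the triangle inequality relative to the homomorphism $\rho$:
\[
\|u_i-u_{i+1}\|\leq \|u_i-\rho_i\|+\|\rho_i-\rho_{i+1}\|+\|\rho_{i+1}-u_{i+1}\|.
\]
The first and third terms are each at most $d(u,\rho)\leq r+R$ by hypothesis. For the middle term, since $\rho\in\operatorname{Hom}(\mathbb{Z},\mathbb{R}^d)$ we have $\rho_i-\rho_{i+1}=-\rho_1=\rho_{-1}$, so $\|\rho_i-\rho_{i+1}\|=\|\rho_{-1}\|\leq\|\rho\|$ (interpreting $\|\rho\|$ as the norm of the generator, consistently with its use in Lemma~\ref{lemma:Delta}). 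Combining these gives $\|u_i-u_{i+1}\|\leq \|\rho\|+2(r+R)$, i.e. $u_i-u_{i+1}\in \overline{B}_{\|\rho\|+2(r+R)}(0)$.

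There is a minor technical point in that the supremum in the definition of $K$ is over the \emph{open} ball $B_{\|\rho\|+2(r+R)}(0)$ while the estimate above only places $u_i-u_{i+1}$ in the closed ball; but this causes no difficulty because $\sigma_{\max}(\nabla^2 I(\cdot))$ is continuous, so its supremum over the open ball equals its supremum over the closed ball (the closed ball is the closure of the open one). Thus $\sigma_{\max}(\nabla^2 I(u_i-u_{i+1}))\leq \sup_{x\in B_{\|\rho\|+2(r+R)}(0)}\sigma_{\max}(\nabla^2 I(x))=\frac{K(\rho,r+R)}{4}$, which is exactly the claim. I do not anticipate any real obstacle here: the lemma is purely a bookkeeping step that records where the second derivative of $I$ needs to be controlled, and the only thing to be careful about is matching the radius $\|\rho\|+2(r+R)$ built into $K(\rho,r+R)$ with the triangle-inequality bound, and handling the open-versus-closed ball subtlety by continuity.
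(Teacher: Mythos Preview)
Your proposal is correct and follows essentially the same route as the paper: both estimate $\|u_i-u_{i+1}\|$ by the triangle inequality through $\rho_i$ and $\rho_{i+1}$ to land in the ball of radius $\|\rho\|+2(r+R)$, then invoke the formula $K(\rho,r+R)=4\sup_{x\in B_{\|\rho\|+2(r+R)}(0)}\sigma_{\max}(\nabla^2 I(x))$ from Lemma~\ref{lemma:Delta}. Your additional remark on the open-versus-closed ball via continuity of $\sigma_{\max}(\nabla^2 I(\cdot))$ is a nice touch that the paper leaves implicit.
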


\begin{proof}
Since $d(u,\rho)\leq r+R$, we have \begin{align*}
    \|u_i-u_{i+1}\|&\leq \|u_i-\rho_i\|+\|\rho_{i+1}-u_{i+1}\|+\|\rho\|\\ 
    &\leq 2(r+R)+\|\rho\|.
\end{align*}
Thus: \[
    \sigma_{\max}(\nabla^2I(u_i-u_{i+1}))\leq \sup_{x\in B_{\|\rho\|+2(r+R)}(0)}\sigma_{\max}(\nabla^2I(x))=\frac{K(\rho,r+R)}{4}.\qquad \qedhere
\]
\end{proof}

\begin{lemma}\label{lemma:Sigmamin}
    If $\nabla V$ satisfies the Aubry criterion with parameters $O,R,r,m$, and $u$ is a configuration satisfying $\sup_{i\in\mathbb{Z}}\inf_{z\in O}\|u_i-z\|\leq r$, then  $$\sigma_{\min}(\nabla^2V(u_i))\geq m>0, \quad \forall i\in\mathbb{Z}.$$
\end{lemma}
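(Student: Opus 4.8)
The plan is to show that condition (iii) of the Aubry criterion forces $\nabla^2 V$ to be uniformly invertible on each closed ball $\overline{B}_r(z)$ with $z\in O$, and then to observe that the hypothesis places every $u_i$ inside such a ball.

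First I would fix $z\in O$ and a point $x$ in the \emph{open} ball $B_r(z)$. For any unit vector $v\in\mathbb{R}^d$ and all sufficiently small $t\in\mathbb{R}$, both $x$ and $x+tv$ lie in $\overline{B}_r(z)$, so applying condition (iii) to $\psi=\nabla V$ gives $\|\nabla V(x+tv)-\nabla V(x)\|\geq m|t|$. Dividing by $|t|$ and letting $t\to 0$ — using that $V\in C^2$, so that $\nabla V$ is differentiable with derivative $\nabla^2 V$ — yields $\|\nabla^2 V(x)v\|\geq m$. Since $v$ is an arbitrary unit vector and $\sigma_{\min}(\nabla^2 V(x))=\min_{\|v\|=1}\|\nabla^2 V(x)v\|$, this is precisely $\sigma_{\min}(\nabla^2 V(x))\geq m$.

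Next I would upgrade this from $B_r(z)$ to $\overline{B}_r(z)$ by continuity: the map $x\mapsto\sigma_{\min}(\nabla^2 V(x))$ is continuous, since $\nabla^2 V$ is continuous (as $V\in C^2$) and the smallest singular value depends continuously on the matrix. Hence the inequality $\sigma_{\min}(\nabla^2 V(x))\geq m$, which holds on the dense subset $B_r(z)\subset\overline{B}_r(z)$, holds on all of $\overline{B}_r(z)$. Finally, for a given $i\in\mathbb{Z}$, the hypothesis $\sup_{i}\inf_{z\in O}\|u_i-z\|\leq r$ provides a point $z\in O$ with $u_i\in\overline{B}_r(z)$, and the previous step gives $\sigma_{\min}(\nabla^2 V(u_i))\geq m>0$.

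The only subtle point is the behaviour on the boundary sphere $\partial\overline{B}_r(z)$, where $x$ cannot be perturbed in every direction without leaving $\overline{B}_r(z)$; this is exactly why I would prove the estimate first on the open ball and then extend it by continuity, rather than attempting it in a single step. Everything else is routine.
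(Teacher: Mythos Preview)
Your proposal is correct and follows essentially the same approach as the paper: both use the difference quotient of $\nabla V$ together with condition~(iii) of the Aubry criterion to bound $\|\nabla^2 V(x)\,v\|$ from below by $m$ for every unit vector $v$, and then identify this with $\sigma_{\min}(\nabla^2 V(x))$. You are in fact more careful than the paper, which applies the difference-quotient argument directly at $u_i$ without checking that $u_i+tv$ remains in $\overline{B}_r(z)$; your two-step route (open ball first, then continuity of $x\mapsto\sigma_{\min}(\nabla^2 V(x))$) cleanly handles the boundary case the paper glosses over.
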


\begin{proof}
    For any $i\in\mathbb{Z}$, since $\sup_{i\in\mathbb{Z}}\inf_{z\in O}\|u_i-z\|\leq r$, there exists $z\in O$ such that  $u_i\in\overline{B}_r(z)$.
     For any $v\in\mathbb{R}^d$ with $\|v\|=1$, we compute \begin{align*}
        \|\nabla^2V(u_i)\cdot v\|=\lim_{t\to0}\left\|\frac{\nabla V(u_i+tv)-\nabla V(u_i)}{t}\right\|\geq  \lim_{t\to0}\frac{m\|tv\|}{|t|}=m.\qquad \qedhere
    \end{align*}
\end{proof}

\begin{proof}[Proof of Theorem \ref{thm:hyperbolic}]
Let $\mu$ and $\alpha$ denote the roots of the quadratic equation $x^2 - (2+\frac{4R}{r})x + 1 = 0$, where $\mu >1$ and $\alpha<1$. We first verify item (i) of Definition \ref{def:hyperbolic}.
    Assume $\|\xi_{i-1}\|\leq \alpha\|\xi_i\|$. Then, we have \begin{align*}
        &\frac{K(\rho,r+R)}{4}\|\xi_{i+1}\|\\ 
        \geq &\|\nabla^2I(u_i-u_{i+1})\xi_{i+1}\|\\ 
        = &\| \nabla^2I(u_i-u_{i+1})\xi_i-\nabla^2I(u_{i-1}-u_i)(\xi_{i-1}-\xi_i)+\lambda\nabla^2V(u_i)\xi_i\|\\ 
        \geq &\left(\lambda\sigma_{\min}(\nabla^2V(u_i))-\frac{K(\rho,r+R)}{4}-\frac{K(\rho,r+R)}{4}-\frac{ K(\rho,r+R)}{4}\alpha\right)\|\xi_i\|\\ 
        \geq &\frac{K(\rho,r+R)}{4}\left(2+\frac{4R}{r}-\alpha\right)\|\xi_i\|\\ 
        =&\frac{K(\rho,r+R)}{4}\cdot\frac{1}{\alpha}\|\xi_i\|,
    \end{align*}
    where the three inequalities follow from Lemma \ref{lemma:Sigmamax}, the triangle inequality, and Lemma \ref{lemma:Sigmamin}, respectively.
    
    Since $\|\xi_{i-1}\|\leq \alpha\|\xi_i\|$ and $\|\xi_{i}\|\leq \alpha\|\xi_{i+1}\|$, it follows that\begin{align*}
        \|(\xi_i,\xi_{i+1})\|&=\sqrt{\|\xi_i\|^2+\|\xi_{i+1}\|^2}\\ 
        &\geq \sqrt{\frac{1}{\alpha^2}\|\xi_{i-1}\|^2+\frac{1}{\alpha^2}\|\xi_{i}\|^2}\\ 
        &=\frac{1}{\alpha}\|(\xi_{i-1},\xi_i)\|\\ 
        &=\mu\|(\xi_{i-1},\xi_i)\|.
    \end{align*}
    The proof of item (ii) in Definition \ref{def:hyperbolic} follows a similar argument.
    \end{proof}

Let $I:\mathbb{R}^d\rightarrow\mathbb{R}$ be a $C^2$ function such that $$0<\varepsilon\leq \sigma_{\min}(\nabla^2I(x))\leq \sigma_{\max}(\nabla^2I(x))\leq\mathcal{E}<+\infty,\quad \forall x\in\mathbb{R}^d$$
for some constants $\varepsilon$ and $\mathcal{E}$. The classical results from \cite{golé2001symplectic,katok1995introduction} establish the following: \begin{enumerate}
	\item [(i)] $S_\lambda$ can be associated with the twist map $$F_\lambda:\mathbb{R}^d\times\mathbb{R}^d\rightarrow\mathbb{R}^d\times\mathbb{R}^d$$
 defined by: \begin{align*}
    F_\lambda(x,p)=(y,p') \Longleftrightarrow\begin{cases}
      p=-\nabla I(x-y)-\lambda \nabla V(x)\\ 
      p'=-\nabla I(x-y)
    \end{cases}.
  \end{align*}

    \item [(ii)]$F_\lambda$ is a $C^1$-diffeomorphism on $\mathbb{R}^d\times\mathbb{R}^d$;
    \item [(iii)]$u\in\mathcal{C}$ is an equilibrium configuration of $S_\lambda$ if and only if  $F_\lambda(u_i,p_i)=(u_{i+1},p_{i+1}) $ for all $i\in\mathbb{Z}$, where $p_i=-\nabla I(u_i-u_{i+1})-\lambda \nabla V(u_i)$.\end{enumerate}

\begin{theorem}\label{thm:DefofHyperbolic}
	Let $\lambda>0$ and $I,V\in C^2(\mathbb{R}^d;\mathbb{R})$. Suppose there exist $\varepsilon>0$ and $\mathcal{E}>0$ such that  $$0<\varepsilon\leq \sigma_{\min}(\nabla^2I(x))\leq \sigma_{\max}(\nabla^2I(x))\leq\mathcal{E}<+\infty,\quad \forall x\in\mathbb{R}^d.$$
	If $u$ is a hyperbolic equilibrium configuration of $S_\lambda$, then the sequence $\{(u_i,p_i)\}_{i\in\mathbb{Z}}$ is a hyperbolic orbit of $F_\lambda$, where  $p_i=-\nabla I(u_i-u_{i+1})-\lambda \nabla V(u_i)$ and $F_\lambda$ is the twist map associated with $S_\lambda$.
\end{theorem}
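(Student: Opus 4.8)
The plan is to translate the ``derivative-side'' hyperbolicity encoded in Definition~\ref{def:hyperbolic} into the standard cone-field characterization of a hyperbolic orbit for the diffeomorphism $F_\lambda$. First I would compute the linearization $DF_\lambda$ at a point $(u_i,p_i)$ of the orbit. Differentiating the implicit equations defining $F_\lambda$, writing $A_i:=\nabla^2 I(u_i-u_{i+1})$ (invertible, since $\sigma_{\min}(A_i)\ge\varepsilon$) and $B_i:=\lambda\nabla^2 V(u_i)$, one gets that $(\delta x,\delta p)\mapsto(\delta y,\delta q)$ satisfies $\delta p=-A_i(\delta x-\delta y)-B_i\,\delta x$ and $\delta q=-A_i(\delta x-\delta y)$, hence
\begin{align*}
\delta y&=\delta x-A_i^{-1}\bigl(\delta p+B_i\,\delta x\bigr),\\
\delta q&=\delta p+B_i\,\delta x.
\end{align*}
The key observation is that a sequence of increments $\{(\delta x_i,\delta p_i)\}$ lies along an orbit of $DF_\lambda$ (i.e.\ $DF_\lambda(u_i,p_i)(\delta x_i,\delta p_i)=(\delta x_{i+1},\delta p_{i+1})$) \emph{if and only if} the position components $\xi_i:=\delta x_i$ solve the linearized equilibrium equation~\eqref{eq:linearization}, with $\delta p_i=-A_i(\xi_i-\xi_{i+1})-B_i\xi_i = A_{i-1}(\xi_{i-1}-\xi_i)$ forced by the recursion. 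This is the standard correspondence between orbits of the linearized twist map and solutions of the Jacobi (second-variation) equation; I would state and prove it as a short lemma.

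Next I would set up the stable/unstable cone fields on $\mathbb{R}^d\times\mathbb{R}^d$ in terms of the ratio $\|\xi_{i-1}\|/\|\xi_i\|$. Concretely, define at $(u_i,p_i)$ the unstable cone $\mathcal{C}^u_i:=\{(\delta x,\delta p): \|\xi_{i-1}\|\le\alpha\|\delta x\|\}$, where $\xi_{i-1}$ is recovered from $(\delta x,\delta p)$ via $A_{i-1}(\xi_{i-1}-\delta x) = \delta p$, i.e.\ $\xi_{i-1}=\delta x-A_{i-1}^{-1}\delta p$; similarly the stable cone $\mathcal{C}^s_i$ uses the condition $\|\xi_{i+1}\|\le\beta\|\delta x\|$ where $\xi_{i+1}$ is obtained from the forward step. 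Item~(i) of Definition~\ref{def:hyperbolic} says precisely that $DF_\lambda(\mathcal{C}^u_i)\subset\mathcal{C}^u_{i+1}$ together with an expansion estimate, and item~(ii) gives the dual statement $DF_\lambda^{-1}(\mathcal{C}^s_{i+1})\subset\mathcal{C}^s_i$ with contraction; the norms $\|(\xi_{i-1},\xi_i)\|$ appearing there are comparable (up to constants depending only on $\varepsilon,\mathcal{E},\lambda$, and the Aubry parameters) to the ambient norm $\|(\delta x,\delta p)\|$ on $T_{(u_i,p_i)}(\mathbb{R}^d\times\mathbb{R}^d)$, because $\delta p$ is a bounded invertible linear function of $(\xi_{i-1},\xi_i)$ with bounds $\varepsilon\le\sigma\le\mathcal{E}$ and $\xi_i=\delta x$. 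I would make this norm-equivalence explicit so that the factor $\mu>1$ in the definition produces a genuine uniform expansion/contraction rate for $DF_\lambda$ in the ambient norm, at the cost of passing to a fixed power $N$ of the map if needed to dominate the equivalence constants.

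Finally, I would invoke the standard cone-criterion for uniform hyperbolicity (e.g.\ from \cite{katok1995introduction}): a sequence of cone fields that is forward-invariant under $DF_\lambda$ with eventual expansion, and whose complementary cone field is backward-invariant with eventual contraction, and which together span the tangent space at each point, yields an $DF_\lambda$-invariant splitting $T_{(u_i,p_i)}(\mathbb{R}^d\times\mathbb{R}^d)=E^s_i\oplus E^u_i$ with uniform hyperbolic estimates. The transversality $\mathcal{C}^u_i\cap\mathcal{C}^s_i=\{0\}$ for the right choice of $\alpha,\beta<1$ follows since a nonzero vector in the intersection would force $\|\xi_{i-1}\|\le\alpha\|\xi_i\|$ and $\|\xi_{i+1}\|\le\beta\|\xi_i\|$ simultaneously, which by both items of Definition~\ref{def:hyperbolic} yields $\|\xi_i\|\le\alpha\|\xi_{i+1}\|\le\alpha\beta\|\xi_i\|<\|\xi_i\|$, a contradiction unless $\xi_i=0$, whence $(\delta x,\delta p)=0$. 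I expect the main obstacle to be bookkeeping the norm-equivalence constants carefully enough that the expansion factor $\mu$ survives — in particular checking that the ambient norm of $(\delta x,\delta p)$ inside $\mathcal{C}^u_i$ is controlled two-sidedly by $\|(\xi_{i-1},\xi_i)\|$ uniformly in $i$, which is where uniform strong convexity of $I$ (the bounds $\varepsilon,\mathcal{E}$) is essential and where the weaker hypotheses of Theorem~\ref{thm:hyperbolic} alone would not suffice.
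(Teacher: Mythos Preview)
Your proposal is correct and follows essentially the same strategy as the paper: establish the correspondence between $DF_\lambda$-orbits and solutions of the Jacobi equation~\eqref{eq:linearization}, set up cone fields governed by the ratios $\|\xi_{i-1}\|/\|\xi_i\|$, and invoke the cone criterion. The paper packages your norm-equivalence step as an explicit conjugacy by the discrete Legendre transform $L(x,y)=(y,-\nabla I(x-y))$, so that $\Sigma_\lambda:=L^{-1}\circ F_\lambda\circ L$ acts by $(u_{i-1},u_i)\mapsto(u_i,u_{i+1})$ and the cones from Definition~\ref{def:hyperbolic} apply verbatim in those coordinates; this conjugacy is precisely your change of variables $(\delta x,\delta p)\leftrightarrow(\xi_{i-1},\xi_i)$, with the uniform bounds $\varepsilon,\mathcal{E}$ giving the bi-Lipschitz constants of $L$.
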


\begin{proof}
	Consider the discrete Legendre transform defined as: \begin{align*}
		L:\mathbb{R}^d\times\mathbb{R}^d&\rightarrow\mathbb{R}^d\times\mathbb{R}^d\\ 
		(x,y)&\mapsto (y,-\nabla I(x-y)).
	\end{align*}
	Since  $I$ is a $C^2$ function satisfying $$0<\varepsilon\leq \sigma_{\min}(\nabla^2I(x))\leq \sigma_{\max}(\nabla^2I(x))\leq\mathcal{E}<+\infty,\quad \forall x\in\mathbb{R}^d.$$
	We can estimate the upper bound of largest singular value of $$DL(x,y)=\begin{bmatrix}
		\mathbb{O} &\mathbb{I}\\ 
		-\nabla^2I(x-y)& \nabla^2 I(x-y)
	\end{bmatrix}$$
	as follows: \begin{align*}
		\sigma_{\max}(DL(x,y))=&\sup_{\xi_1,\xi_2\in\mathbb{R}^d,\|\xi_1\|^2+\|\xi_2\|^2\leq 1}\left\|DL(x,y)(
			\xi_1,
			\xi_2)
		\right\|\\ 
	= & \sup_{\xi_1,\xi_2\in\mathbb{R}^d,\|\xi_1\|^2+\|\xi_2\|^2\leq 1}\left\|
			(\xi_2,			\nabla^2 I(x-y)(\xi_2-\xi_1))
		\right\|\\ 
	= & \sup_{\xi_1,\xi_2\in\mathbb{R}^d,\|\xi_1\|^2+\|\xi_2\|^2\leq 1}\sqrt{\|\xi_2\|^2+\|\nabla^2 I(x-y)(\xi_2-\xi_1)\|^2}\\ 
	\leq  &\sup_{\xi_1,\xi_2\in\mathbb{R}^d,\|\xi_1\|^2+\|\xi_2\|^2\leq 1}\sqrt{\|\xi_2\|^2+\mathcal{E}^2\|\xi_2-\xi_1\|^2}\\ 
	\leq  & \sqrt{1+4\mathcal{E}^2}.
		\end{align*}
		Similarly, the upper bound of the largest singular value of $DL^{-1}(x,y)$  can be estimated as:$$\sigma_{\max}(DL^{-1}(x,y))\leq \sqrt{1+4\varepsilon^{-2}}. $$
	Define $\Sigma_\lambda:=L^{-1}\circ F_\lambda\circ L$. Then $u$ is an equilibrium configuration if and only if $\Sigma_\lambda(u_{i-1},u_i)=(u_i,u_{i+1})$ for all $i\in\mathbb{Z}$. Consequently, $\{\xi_i\}_{i\in\mathbb{Z}}$ satisfies \eqref{eq:linearization} if and only if $$D\Sigma_\lambda(u_{i-1},u_i)(\xi_{i-1},\xi_i)=(\xi_{i},\xi_{i+1}),\quad \forall i\in\mathbb{Z}.$$
	If  $u$ is a hyperbolic equilibrium configuration, there exists $\mu>1$, $\alpha\in(0,1)$, and $\beta\in (0,1)$ such that Definition \ref{def:hyperbolic} holds.
	Define two cones in $T_{u_{i-1}}\mathbb{R}^d\oplus T_{u_{i}}\mathbb{R}^d$ as: \begin{align*}
		C^u_i:=\{(\xi_{i-1},\xi_{i})\mid \|\xi_{i-1}\|\leq \alpha\|\xi_{i}\|\}\quad \text{and }\quad C^s_i:=\{(\xi_{i-1},\xi_{i})\mid \|\xi_{i}\|\leq \beta\|\xi_{i-1}\|\}.
	\end{align*}
	 Then: \begin{enumerate}
	 	\item [(i)] $D\Sigma_\lambda(u_{i-1},u_i)(C^u_i)\subset C^u_{i+1}$;
	 	\item [(ii)]$D\Sigma_\lambda^{-1}(u_{i},u_{i+1})(C^s_{i+1})\subset C^s_{i}$;
	 	\item [(iii)]$\|D\Sigma_\lambda(u_{i-1},u_i)(\xi_{i-1},\xi_i)\|\geq \mu\|(\xi_{i-1},\xi_i)\|, \forall (\xi_{i-1},\xi_i)\in C^u_i$;
	 	\item [(iv)]$\|D\Sigma_\lambda^{-1}(u_{i},u_{i+1})(\xi_{i},\xi_{i+1})\|\geq \mu\|(\xi_{i},\xi_{i+1})\|, \forall (\xi_{i},\xi_{i+1})\in C^s_{i+1}$.
	 \end{enumerate}
	 Using the Alekseev Cone Field Criterion \cite[Chapter 6]{katok1995introduction} for $\Sigma_\lambda$, we obtain a hyperbolic decomposition: $\{T_{u_{i-1}}\mathbb{R}^d\oplus T_{u_{i}}\mathbb{R}^d=E^u_i\oplus E^s_i\}_{i\in\mathbb{Z}}$ along the orbit $\{(u_{i-1},u_i)\}_{i\in\mathbb{Z}}$ of $\Sigma_\lambda$. Consequently, for $F_\lambda$, we get:$$T_{u_i}\mathbb{R}^d\oplus T_{p_{i}}\mathbb{R}^d=(DL(u_{i-1},u_i)E_i^u)\oplus(DL(u_{i-1},u_i)E_i^s), \quad \forall i\in\mathbb{Z}.$$
	 Since $F_\lambda=L\circ \Sigma_\lambda\circ L^{-1}$, this decomposition is $DF_\lambda$-invariant, and for any  $n\geq 0$, we have: \begin{align*}
	 	\|DF_\lambda^n(u_i,p_i)(\xi_i,\zeta_i)\|=&\|DL\circ D\Sigma_\lambda^n\circ DL^{-1}(u_i,p_i)(\xi_i,\zeta_i)\|\\ 
	 	\leq &\sqrt{1+4\varepsilon^{-2}}\sqrt{1+4\mathcal{E}^2}\mu^{-n}\|(\xi_i,\zeta_i)\|, \quad \forall (\xi_i,\zeta_i)\in DL(u_{i-1},u_i)E_i^s
	 \end{align*}
	 and \begin{align*}
	 	\|DF_\lambda^{-n}(u_i,p_i)(\xi_i,\zeta_i)\|=&\|DL^{-1}\circ D\Sigma_\lambda^{-n}\circ DL(u_i,p_i)(\xi_i,\zeta_i)\|\\ 
	 	\leq &\sqrt{1+4\varepsilon^{-2}}\sqrt{1+4\mathcal{E}^2}\mu^{-n}\|(\xi_i,\zeta_i)\|, \quad \forall (\xi_i,\zeta_i)\in DL(u_{i-1},u_i)E_i^u.
	 \end{align*}
	 
	 For any $i\in\mathbb{Z}$, let   $v_i\in DL(u_{i-1},u_i)E_i^u$ and $w_i\in DL(u_{i-1},u_i)E_i^s$ be two unit vectors. Then, we have the inequality:\begin{align*}
	 	\|v_i-w_i\| 
	 	&\geq \frac{1}{\sqrt{1+4\varepsilon^{-2}}}\|DL^{-1}(u_{i-1},u_i)v_i-DL^{-1}(u_{i-1},u_i)w_i\|.	 \end{align*}
	 	Define  $v_i':=DL^{-1}(u_{i-1},u_i)v_i$ and $w_i':=DL^{-1}(u_{i-1},u_i)w_i$. Clearly, $v_i'\in E_i^u$ and $w_i'\in E_i^s$.
	 Given that $\alpha\in(0,1)$ and $\beta\in (0,1)$, there exists a uniform angle $\gamma>0$ between the two cone fields $\{C_i^u\}_{i\in\mathbb{Z}}$ and $\{C_i^s\}_{i\in\mathbb{Z}}$. Consequently,  $\gamma$ is also the uniform angle between $v_i'$ and $w_i'$. Additionally, since $\|v_i'\|\geq \frac{1}{\sqrt{1+4\mathcal{E}^2}}$ and $\|w_i'\|\geq \frac{1}{\sqrt{1+4\mathcal{E}^2}}$, the angle $\gamma$ ensures the existence of a constant $\delta>0$, independent of $i\in\mathbb{Z}$, such that: $$\|v_i'-w_i'\|\geq \delta.$$ This implies: $$\|v_i-w_i\|\geq \frac{\delta}{\sqrt{1+4\varepsilon^{-2}}}>0.$$ Thus, there exists a uniform angle between $\{DL(u_{i-1},u_i)E_i^u\}_{i\in\mathbb{Z}}$ and $\{DL(u_{i-1},u_i)E_i^s\}_{i\in\mathbb{Z}}$.
	 
	 	As a result,  $\{(u_i,p_i)\}_{i\in\mathbb{Z}}$ is a hyperbolic orbit of $F_\lambda$.
	 \end{proof}

\section*{Acknowledgments}
The authors thank Yujia An and Dongyu Yao for a careful reading of the manuscripts and many helpful suggestions.

\section*{Declaration of interests}
The authors declare that they have no known competing financial interests or personal relationships that could have appeared to influence the work reported in this paper.

\bibliographystyle{is-abbrv}  
\bibliography{references}  

\end{document}